\documentclass[a4paper,12pt]{article}
\usepackage{amsmath}
\usepackage{amssymb}
\usepackage{setspace}
\usepackage{fullpage}
\usepackage{bbm}
\usepackage{amsthm}
\usepackage[latin1]{inputenc}
\usepackage{xcolor}
\usepackage[normalem]{ulem}
\usepackage{hyperref}
\usepackage{rotating}
\usepackage{graphicx}
\usepackage{enumerate}

\usepackage[normalem]{ulem} 

\usepackage[
singlelinecheck=false % <-- important
]{caption}

\newtheorem{theorem}{Theorem}[section]
\newtheorem{lemma}[theorem]{Lemma}

\newtheorem{corollary}[theorem]{Corollary}

\theoremstyle{definition}

\newtheorem{construction}[theorem]{Construction}
\newtheorem{definition}[theorem]{Definition}
\newtheorem{proposition}[theorem]{Proposition}

\newtheorem{remark}[theorem]{Remark}
\newtheorem*{remark*}{Remark}

\newtheorem{example}[theorem]{Example}
\newtheorem*{example*}{Example}

\usepackage{pdfsync}
\def\PG{\mathrm{PG}}  
  
\def\PGammaL{\mathrm{P}\Gamma\mathrm{L}}

\def\F{\mathbb{F}}

\title{On linear sets of minimum size}

\author{Dibyayoti Jena  \thanks{Supported by the Marsden Fund Council administered by the Royal Society of New Zealand.} \and Geertrui Van de Voorde \thanks{Postdoctoral fellow of the Research Foundation Flanders (FWO -- Vlaanderen) and supported by the Marsden Fund Council administered by the Royal Society of New Zealand.}}

\begin{document}
\maketitle
\begin{abstract} An $\F_q$-linear set of rank $k$, $k\le h$, on a projective line $\PG(1,q^h)$, containing at least one point of weight one, has size at least $q^{k-1}+1$ (see \cite{vdv}). The classical example of such a set is given by a {\em club}. 
In this paper, we construct a broad family of linear sets meeting this lower bound, where we are able to prescribe the weight of the heaviest point to any value between $k/2$ and $k-1$. Our construction extends the known examples of linear sets of size $q^{k-1}+1$ in $\PG(1,q^h)$ constructed for $k=h=4$ \cite{bonoli} and $k=h$ in \cite{polverino}. We determine the weight distribution of the constructed linear sets and describe them as the projection of a subgeometry. For small $k$, we investigate whether all linear sets of size $q^{k-1}+1$ arise from our construction.

Finally, we modify our construction to define rank $k$ linear sets of size $q^{k-1}+q^{k-2}+\ldots+q^{k-l}+1$ in $\PG(l,q^ h)$. This leads to new infinite families of small minimal blocking sets which are not of R\'edei type.
\end{abstract}

\section{Introduction}
\subsection{Background and motivation}
Linear sets in finite projective spaces have attracted a lot of research in recent years. They are used in the construction of interesting sets such as blocking sets \cite{polito}, translation ovoids \cite{lunardon}, KM-arcs \cite{maarten}, and have been shown to be useful in the study of other topics such as semifields \cite{olgamichel} and rank metric codes \cite{zullo,jon,zini}. For more background about linear sets and these applications, we refer to \cite{wij,olga}.

More formally, let $\F_{q^h}$ be the finite field of order $q^h$, where $q$ is a prime power. An $\mathbb{F}_q$-\textit{linear set} $L$ \textit{of rank k} in a projective space $\PG(V),V=\mathbb{F}_{q^h}^r$, is a set of points defined by a $k$-dimensional $\mathbb{F}_q$-subspace $U$ of $V$ in the following way:$$L=L_U=\{\langle u\rangle_{q^h}|u\in U\backslash\{0\}\}.$$

The \textit{weight} of a point $P=\langle u_P\rangle_{q^h}$ in a linear set $L_U$ is the vector space dimension of the $\mathbb{F}_q$-subspace $U_P$ of all vectors of $U$ determining the point $P$. That is, $$U_P=\{0\}\cup\{u\in U|\langle u\rangle_{q^h}=\langle u_P\rangle_{q^h}\}.$$ 

Note that $\mathbb{F}_{q^h}^r$ is an $rh$-dimensional vector space over $\mathbb{F}_q$. This induces a natural map $\phi$ from $\PG(r-1,q^h)$ to $\PG(rh-1,q)$. Under this map $\phi$ (which is also called the {\em field reduction map}), points and lines of $\PG(r-1,q^h)$ get mapped to $(h-1)$ and $(2h-1)$-dimensional subspaces of $\PG(rh-1,q)$ respectively. The images of the points under this map form a {\em Desarguesian} $(h-1)$-spread of $\PG(rh-1,q)$. A linear set of rank $k$ is then the set of points of $\PG(r-1,q^h)$ whose images under $\phi$ intersect some fixed $(k-1)$-dimensional subspace $\pi$ in $\PG(rh-1,q)$. In this point of view, the weight of a point $P$ in the linear set is one more than the (projective) dimension of $\phi(P)\cap\pi$. 

Another equivalent way of viewing a linear set of rank $k$ involves the projection of a subgeometry onto a projective space as proven in \cite{lunardon}. Let $\Sigma^*=\PG(k-1,q^h)$, and $\Sigma=\PG(k-1,q)$ be a canonical subgeometry. Let $\Omega=\PG(r-1,q^h)$, and let $\Pi$ be a $(k-r-1)$-dimensional subspace of $\Sigma^*$ disjoint from $\Sigma$ and $\Omega$. The projection map $p_{\Pi,\Omega}:\Sigma\rightarrow\Omega$ is defined by $p_{\Pi,\Omega}(x)=\langle x,\Pi\rangle\cap\Omega$. By \cite[Theorems 1 and 2]{lunardon} every rank $k$ linear set $L$ of $\Omega=\PG(r-1,q^h)$ is either a canonical subgeometry of $\Omega$ or can be viewed as a projection of $\Sigma=\PG(k-1,q)$ from $\Pi$ to $\Omega$, where $\Pi$ is a $(k-r-1)$-dimensional subspace of $\Sigma^*=\PG(k-1,q^h)$ disjoint from $\Sigma$ and $\Omega$. From this point of view the weight of a point $P$ is one more than the dimension of the pre-image of $P$ under the projection map (see also \cite[Proposition 2.7]{jon}).   

\subsection{Linear sets on a line}\label{introline}

A particular case of interest have been the {\em scattered} linear sets on a line: these are linear sets that have the largest possible size when their rank is fixed. The `classical' scattered linear sets of rank $h$ in $\PG(1,q^h)$ arise from the Frobenius map $x\mapsto x^q$ on $\F_{q^h}$. Other examples are known (see e.g. \cite{blokhuis,bence,bence2}) but the classification of scattered linear sets of rank $h$ in $\PG(1,q^h)$ is in general an open problem. For more information about scattered linear sets and their applications, see \cite{lavrauw}. 

On the other side of the size spectrum, we find the smallest possible linear sets when their rank is fixed. For {\em strictly $\F_q$-linear sets} (which are not linear over a superfield of $\F_q$) of rank $h$ in $\PG(1,q^h)$ the lower bound $q^{h-1}+1$ follows from the work of \cite{simeon,bbb} using R\'edei polynomials. Only recently, in \cite[Theorem 3.7]{vdv}, the lower bound $q^{k-1}+1$ was established for linear sets of rank $k\leq h$ {\em containing a point of weight one} in $\PG(1,q^h)$. 

The `classical' example of a rank $k$ linear set of size $q^{k-1}+1$ is called a {\em club}. It contains one point of weight $k-1$ (also called the {\em head}) and the other $q^{k-1}$ points have weight $1$ (see also \cite[Proposition 3.8]{vdv}). Clubs of the same rank are not necessarily equivalent, but they are for $k=h$ (see Subsection \ref{trace}). Other examples of rank $h$ linear sets of size $q^{h-1}+1$ in $\PG(1,q^h)$ were constructed by Lunardon and Polverino \cite{polverino}. These examples contain one point of weight $h-2$, $q^{h-3}$ points of weight $2$ and $q^{h-1}-q^{h-3}$ points of weight $1$. Their example is given by the set 
\begin{align}\{\langle(x_0+x_1\lambda,y_0+y_1\lambda+\dots+y_{h-3}\lambda^{h-3})\rangle\mid (x_0,x_1,y_0,y_1,\dots,y_{h-3})\in(\mathbb{F}_q^h)^{*}\},\label{Lu-Po}\end{align}
where $\lambda$ is a primitive element of $\mathbb{F}_{q^h}$.
For $k=h=4$, this example has size $q^3+1$, has only points of weight $1$ and $2$, and also appears in Bonoli and Polverino \cite{bonoli}. In this paper, we will generalise this example.

\subsection{Linear sets in a plane and linear blocking sets}

Linear sets in $\PG(2,q^h)$ are of a particular interest when their rank is $h+1$: in this case, they form {\em small minimal blocking sets}. A \textit{blocking set in $\PG(2,q_0)$} is a set of points that meets every line of $\PG(2,q_0)$. A blocking set is called \textit{minimal} if it does not contain a proper subset which in itself is a blocking set and it is called \textit{small} if it has less than $\frac{3(q_0+1)}{2}$ points. Finally, a blocking set $B$ in $\PG(2,q_0)$ is said to be of \textit{R\'edei type} is there is a line $\ell$ such that there are precisely $q_0$ points of $B$ that do not on the line $\ell$.
Now recall that a line in $\PG(2,q^h)$ is mapped to a $(2h-1)$-dimensional subspace under the field reduction map $\phi:\PG(2,q^h)\rightarrow\PG(3h-1,q)$ and an $\F_q$-linear set of rank $h+1$ can be determined by an $h$-dimensional subspace of $\PG(3h-1,q)$. Since in $\PG(3h-1,q)$, a $(2h-1)$-dimensional and an $h$-dimensional subspace always intersect, the constructed linear set is clearly a blocking set. Since its size is at most $(q^{h+1}-1)/(q-1)$, it is small. Furthermore, it is not hard to see that every point of the blocking set lies on at least one tangent line to the set, making the blocking set minimal (see e.g. \cite[Lemma 1]{polito}).

By \cite[Theorem 4.1]{vdv} if there is a $(q+1)$-secant, i.e. a line intersecting the linear set in exactly $q+1$ points,  then the minimum size of a rank $k\leq h+1$ linear set in $\PG(2,q^h)$ is $q^{k-1}+q^{k-2}+1$. It is worth noting that $\F_q$-linear blocking sets in $\PG(2,q^h)$ that are not $\F_{q^i}$-linear for some $i>1$ always contain a $(q+1)$-secant by \cite{sziklai}.

Examples of linear blocking sets (i.e. $k=h+1$) of size $q^h+q^{h-1}+1$ in $\PG(2,q^h)$ of both {\em R\'edei and non-R\'edei type} were constructed by Lunardon and Polverino in \cite{polverino}. This extended the work done in \cite{polito}, where the first examples of non-R\'edei type linear blocking sets were constructed, disproving the then commonly held belief that all small minimal blocking sets must be of R\'edei type. 

The weights of the points in the examples of \cite{polverino} are contained in $\{1,2,h-3,h-2,h-1\}$. In our case, we will be able to construct a linear set of rank $k$ of size $q^{k-1}+q^{k-2}+1$ where we can specify the weight of the heaviest points to be any value between $k/3$ and $k-2$.
These examples fall in a more general class of rank $k$ linear sets of size $q^{k-1}+q^{k-1}+\ldots+q^{k-l}+1$ in $\PG(l,q^h)$ which will be presented in Theorem \ref{main}.

\section{Rank $k$ linear sets of size $q^{k-1}+1$ on a line}

\subsection{Construction}

Recall from \cite{lunardon} that we can construct rank $k$ $(1\le k-1\le h)$ linear sets on a line $\Omega=\PG(1,q^h)$ as a projection of a canonical subgeometry $\Sigma=\PG(k-1,q)$ of $\Sigma^*=\PG(k-1,q^h)$ from a suitable $(k-3)$-dimensional subspace $\Pi$, disjoint from $\Sigma$ and $\Omega$, as the axis of the projection.

In the following construction we will use the standard homogeneous coordinates for $\Sigma^*$ and $\Sigma$, i.e. the points in $\Sigma^*$ and $\Sigma$ are of the form $\langle(a_1,a_2,\dots,a_k)\rangle$ and $\langle(b_1,b_2,\dots,b_k)\rangle$ respectively, where $a_i\in \mathbb{F}_{q^h}$ and $b_i\in \mathbb{F}_q$, for $i\in\{1,2,\dots,h\}$. We will use the notation $\textbf{e}_\textbf{j}$ for the vector $(0,\dots,0,1,0,\dots,0)$, where 1 is in the $j_{\text{th}}$ position.

\begin{construction}\label{cons1}
Partition the set $\{\textbf{e}_\textbf{1},\dots,\textbf{e}_\textbf{k}\}$ into $2$ parts $A_1$ and $A_2$ of size $t_1$ and $t_2$ with $t_1,t_2\ge1$. Without loss of generality, let 
\begin{align*}
P_1&=\{\textbf{e}_\textbf{1},\dots,\textbf{e}_{\textbf{t}_\textbf{1}}\}=\{\textbf{e}_{\textbf{1}\textbf{,}\textbf{1}},\dots,\textbf{e}_{\textbf{1}\textbf{,}\textbf{t}_\textbf{1}}\}\\
P_2&=\{\textbf{e}_{\textbf{t}_\textbf{1}\textbf{+}\textbf{1}},\dots,\textbf{e}_{\textbf{t}_\textbf{1}\textbf{+}\textbf{t}_\textbf{2}}\}=\{\textbf{e}_{\textbf{2}\textbf{,}\textbf{1}},\dots,\textbf{e}_{\textbf{2}\textbf{,}\textbf{t}_\textbf{2}}\}
\end{align*}
Note that we consider the above sets as ordered sets. 
Consider $\alpha\in \mathbb{F}_{q^h}\backslash\mathbb{F}_q$ generating a degree $s$-extension of $\F_q$ (i.e. $[\F_q(\alpha):\F_q]=s$), with $k-1\leq s$, $s\mid h$. 

With each $A_i, i=1,2,$ we associate the subspace of $\Sigma^*$  defined as
 $\pi_i=\emptyset$ when $t_i=1$ and,
\begin{align*}
\pi_i&=\langle\langle \textbf{e}_{\textbf{i}\textbf{,}\textbf{1}}-\alpha \textbf{e}_{\textbf{i}\textbf{,}\textbf{2}}\rangle,\langle\textbf{e}_{\textbf{i}\textbf{,}\textbf{2}}-\alpha \textbf{e}_{\textbf{i}\textbf{,}\textbf{3}}\rangle,\dots,\langle\textbf{e}_{\textbf{i},\textbf{t}_{\textbf{i}\textbf{-}\textbf{1}}}-\alpha \textbf{e}_{\textbf{i}\textbf{,}\textbf{t}_{\textbf{i}}}\rangle\rangle\\
&=\langle\langle \textbf{e}_{\textbf{i}\textbf{,}\textbf{1}}-\alpha \textbf{e}_{\textbf{i}\textbf{,}\textbf{2}}\rangle,\langle\textbf{e}_{\textbf{i}\textbf{,}\textbf{1}}-\alpha^2\textbf{e}_{\textbf{i}\textbf{,}\textbf{3}}\rangle,\dots,\langle\textbf{e}_{\textbf{i}\textbf{,}\textbf{1}}-\alpha^{t_i-1}\textbf{e}_{\textbf{i}\textbf{,}\textbf{t}_\textbf{i}}\rangle\rangle,  
\end{align*} 
when $t_i\geq 2$. The angle brackets around a vector denote the projective point determined by that vector, whereas the outer angle brackets denote the subspace spanned by these points.
Note that $\pi_1\cap\pi_2=\emptyset.$
Let $\Omega$ be the subspace $$\Omega:=\langle \langle\textbf{e}_{\textbf{1}\textbf{,}\textbf{t}_\textbf{1}}\rangle,\langle\textbf{e}_{\textbf{2}\textbf{,}\textbf{t}_\textbf{2}}\rangle\rangle$$
and $\Pi$ be the $(k-3)$-dimensional subspace $$\Pi:=\langle\pi_1,\pi_{2}\rangle.$$	
\end{construction}

\begin{lemma}\label{lem1}
The projection of $\Sigma$ from $\Pi$ onto $\Omega$	in Construction \ref{cons1} is an $\F_q$-linear set of rank $k$ in $\Omega=\PG(1,q^h)$.
\end{lemma}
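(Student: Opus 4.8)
The plan is to verify the three things that make the statement of Lemma \ref{lem1} meaningful and true: that $\Pi$ has projective dimension $k-3$, that $\Pi$ is disjoint from both $\Sigma$ and $\Omega$, and that consequently the projection map $p_{\Pi,\Omega}$ applied to $\Sigma$ yields an $\F_q$-linear set of rank $k$ in $\Omega=\PG(1,q^h)$. Once the first two are established, the third is immediate from \cite[Theorems 1 and 2]{lunardon} quoted in the introduction: the projection of a canonical subgeometry $\Sigma=\PG(k-1,q)$ from a $(k-3)$-subspace disjoint from $\Sigma$ and $\Omega$ is always a rank $k$ linear set of $\Omega$. So the real content is the dimension count and the two disjointness claims.

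For the dimension of $\Pi=\langle\pi_1,\pi_2\rangle$: each $\pi_i$ is spanned by the $t_i-1$ points $\langle\textbf{e}_{\textbf{i}\textbf{,}\textbf{1}}-\alpha^j\textbf{e}_{\textbf{i}\textbf{,}\textbf{j}\textbf{+}\textbf{1}}\rangle$ for $j=1,\dots,t_i-1$, and these are linearly independent over $\F_{q^h}$ because the underlying vectors involve pairwise disjoint sets of the standard basis vectors $\textbf{e}_{\textbf{i}\textbf{,}\textbf{2}},\dots,\textbf{e}_{\textbf{i}\textbf{,}\textbf{t}_\textbf{i}}$ (and $\alpha\neq 0$). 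Hence $\dim\pi_i=t_i-2$ as a projective subspace when $t_i\geq 2$, and $\pi_i=\emptyset$ when $t_i=1$; in all cases $\pi_i$ is spanned by $t_i-1$ independent vectors. The index sets $A_1$ and $A_2$ are disjoint, so the spanning vectors of $\pi_1$ and $\pi_2$ together involve disjoint blocks of coordinates and are jointly independent; therefore $\Pi$ is spanned by $(t_1-1)+(t_2-1)=k-2$ independent vectors, giving $\dim\Pi=k-3$, and in particular $\pi_1\cap\pi_2=\emptyset$ as claimed. This step is routine bookkeeping with standard basis vectors.

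The disjointness $\Pi\cap\Omega=\emptyset$ is again a coordinate argument. A general vector of the span of $\Pi$ is $\sum_{j=1}^{t_1-1}\mu_j(\textbf{e}_{\textbf{1}\textbf{,}\textbf{1}}-\alpha^j\textbf{e}_{\textbf{1}\textbf{,}\textbf{j}\textbf{+}\textbf{1}})+\sum_{j=1}^{t_2-1}\nu_j(\textbf{e}_{\textbf{2}\textbf{,}\textbf{1}}-\alpha^j\textbf{e}_{\textbf{2}\textbf{,}\textbf{j}\textbf{+}\textbf{1}})$, while a vector of $\Omega$ has the form $a\,\textbf{e}_{\textbf{1}\textbf{,}\textbf{t}_\textbf{1}}+b\,\textbf{e}_{\textbf{2}\textbf{,}\textbf{t}_\textbf{2}}$. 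Comparing coordinates: for a vector in $\Pi$ to be supported only on the positions $(\textbf{1},t_1)$ and $(\textbf{2},t_2)$, all the $\textbf{e}_{\textbf{1}\textbf{,}\textbf{1}}$- and $\textbf{e}_{\textbf{2}\textbf{,}\textbf{1}}$-coordinates must vanish, forcing $\sum_j\mu_j=0$ and $\sum_j\nu_j=0$, and then all intermediate $\textbf{e}_{\textbf{i}\textbf{,}\textbf{m}}$-coordinates ($2\le m\le t_i-1$) must vanish, forcing each $\mu_j=0$ and $\nu_j=0$; so the only common vector is $0$, i.e. $\Pi\cap\Omega=\emptyset$.

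The genuinely delicate point — and the one I expect to be the main obstacle — is $\Pi\cap\Sigma=\emptyset$, because now the coefficients $\mu_j,\nu_j$ lie in $\F_{q^h}$ while membership in $\Sigma$ requires the resulting vector to be an $\F_{q^h}$-scalar multiple of a vector with all coordinates in $\F_q$. Writing out a hypothetical common point $\langle v\rangle$ with $v=\sum_j\mu_j(\textbf{e}_{\textbf{1}\textbf{,}\textbf{1}}-\alpha^j\textbf{e}_{\textbf{1}\textbf{,}\textbf{j}\textbf{+}\textbf{1}})+\sum_j\nu_j(\textbf{e}_{\textbf{2}\textbf{,}\textbf{1}}-\alpha^j\textbf{e}_{\textbf{2}\textbf{,}\textbf{j}\textbf{+}\textbf{1}})$ and $v=c\cdot w$ with $w\in\F_q^k$, $c\in\F_{q^h}^*$, I would read off that the $(\textbf{i},1)$-coordinate equals $c\cdot(\text{something in }\F_q)$ and the $(\textbf{i},j{+}1)$-coordinate equals $-\mu_j\alpha^j$ (resp. $-\nu_j\alpha^j$), each also of the form $c\cdot(\text{something in }\F_q)$. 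This chains into relations saying that certain $\F_q$-combinations of $1,\alpha,\alpha^2,\dots,\alpha^{t_i-1}$ are $\F_q$-proportional to each other; since $k-1\le s=[\F_q(\alpha):\F_q]$, the powers $1,\alpha,\dots,\alpha^{k-2}$ are $\F_q$-linearly independent, and this independence is exactly what forces all the coefficients — hence $v$ — to be zero. I would isolate this as the key sublemma, phrase it cleanly in terms of the $\F_q$-linear independence of $\{1,\alpha,\dots,\alpha^{\max(t_1,t_2)-1}\}$ (which follows from $\max(t_1,t_2)-1\le k-2<s$), and then the disjointness drops out; the remaining verifications are, as indicated above, straightforward coordinate chases.
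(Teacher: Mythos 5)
Your proposal is correct and follows essentially the same route as the paper: both reduce the lemma to the disjointness of $\Pi$ from $\Sigma$ and from $\Omega$, both settle $\Pi\cap\Omega=\emptyset$ by a direct coordinate chase, and both settle $\Pi\cap\Sigma=\emptyset$ by extracting from a hypothetical common point a nontrivial $\F_q$-linear dependence among $1,\alpha,\dots,\alpha^{t_i-1}$, which contradicts $t_i-1<s=[\F_q(\alpha):\F_q]$. The only addition is your explicit verification that $\dim\Pi=k-3$, which the paper treats as immediate.
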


\begin{proof}  We only need to check that $\Pi$ is disjoint from $\Sigma$ and $\Omega$. 
	
Note that for a point $\langle(\lambda_{1,1},\dots,\lambda_{1,t_1},\lambda_{2,1},\ldots,\lambda_{{2},t_{2}})\rangle\in\Pi$, we have 
\begin{equation}
\lambda_{i,1}=-\lambda_{i,2}/\alpha-\lambda_{i,3}/\alpha^2-\dots-\lambda_{i,t_i}/\alpha^{t_i-1},
\label{special}
\end{equation}whenever $t_i\ge2$. If $t_i=1$, then $\pi_i=\emptyset$, forcing $\lambda_{i,t_i}=\lambda_{i,1}=0$.
	
Now assume that there exists a point $$P=\langle(\lambda_{1,1},\dots,\lambda_{1,t_1},\lambda_{2,1},\ldots,\lambda_{{2},t_{2}})\rangle\in\Pi\cap\Sigma.$$ As $P\in\Sigma$ we can also assume that $\lambda_{1,1},\dots,\lambda_{1,t_1},\lambda_{2,1},\ldots,\lambda_{{2},t_{2}}\in \mathbb{F}_q$, not all of which are equal to zero. Then $\lambda_{1,1},\dots,\lambda_{1,t_1}$ or $\lambda_{2,1},\dots,\lambda_{{2},t_{2}}\in \mathbb{F}_q$ are not all zero. Without loss of generality we may assume that the former case holds. Then $t_1\ge2$ and from \eqref{special} $\lambda_{1,1}=-\lambda_{1,2}/\alpha-\lambda_{1,3}/\alpha^2-\dots-\lambda_{1,t_1}/\alpha^{t_1-1},$ contradicting the choice of $\alpha$ since $t_1-1<s$. Hence $\Sigma$ and $\Pi$ are disjoint.
	
Now assume that there exists a point $$P=\langle(0,\dots,0,\lambda_{1,t_1},0,\dots,0,\lambda_{2,t_2})\rangle\in \Omega\cap\Pi,$$ then without loss of generality we may assume that $\lambda_{1,t_1}\ne0$. This implies that $t_1\ge2$ and as $P\in\Pi$, \eqref{special} holds, implying $0=-\lambda_{1,t_1}/\alpha^{t_1-1}$, a contradiction. Thus $\Pi$ is disjoint from $\Omega$.
\end{proof}

In the following lemma, we will use the greatest common divisor of two polynomials with coefficients in $\F_q$ which is only defined up to scalar multiple; in order to avoid this ambiguity, we will take the $\gcd$ to be {\em monic}.

\begin{lemma}\label{lemnew} Let $\alpha\in \F_{q^h}\setminus\F_q$ be an element generating a degree $s$-extension of $\F_q$ (i.e. $[\F_q(\alpha):\F_q]=s$). Consider the polynomials $f_1$, $f_2$, $g_1$, $g_2$ $\in\mathbb{F}_q[X]$ with $f_1,g_1$ monic, $\deg(f_1)\leq t_1-1,\deg(g_1)\leq t_1-1$, $\deg(f_2)\leq t_2-1,\deg(g_2)\leq t_2-1$, $\gcd(f_1,f_2)=\gcd(g_1,g_2)=1$ and 
$$t_1+t_2\leq s+1.$$

Suppose that $f_1(\alpha)g_2(\alpha)=f_2(\alpha)g_1(\alpha)\in \F_{q}(\alpha)$. Then $f_1=g_1$ and $f_2=g_2$.
\end{lemma}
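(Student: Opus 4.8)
The plan is to use the hypothesis $t_1+t_2\le s+1$ to argue that the polynomial identity $f_1(\alpha)g_2(\alpha)=f_2(\alpha)g_1(\alpha)$, which a priori is an equality of elements of $\F_q(\alpha)$, actually forces the equality of polynomials $f_1 g_2 = f_2 g_1$ in $\F_q[X]$. The key observation is that $\deg(f_1 g_2)\le (t_1-1)+(t_2-1)=t_1+t_2-2\le s-1$, and likewise $\deg(f_2 g_1)\le s-1$. Since $\alpha$ generates a degree-$s$ extension, its minimal polynomial over $\F_q$ has degree $s$, so a polynomial of degree $\le s-1$ in $\F_q[X]$ is determined by its value at $\alpha$; equivalently, $1,\alpha,\dots,\alpha^{s-1}$ are $\F_q$-linearly independent. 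Hence $f_1(\alpha)g_2(\alpha)=f_2(\alpha)g_1(\alpha)$ implies $f_1 g_2 = f_2 g_1$ as polynomials.

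Given the polynomial identity $f_1 g_2 = f_2 g_1$, I would finish with a standard coprimality argument in the UFD $\F_q[X]$. From $f_1 g_2 = f_2 g_1$ and $\gcd(f_1,f_2)=1$, we get $f_1 \mid g_1$; from $\gcd(g_1,g_2)=1$ together with $f_1 g_2 = f_2 g_1$ we get $g_1 \mid f_1$. Since $f_1$ and $g_1$ are both monic and divide each other, $f_1 = g_1$. Substituting back into $f_1 g_2 = f_2 g_1$ and cancelling the common (nonzero) factor $f_1 = g_1$ yields $g_2 = f_2$.

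I expect no serious obstacle here; the only subtlety is being careful about the degenerate cases $t_i = 1$, where the degree bound $\deg(f_i)\le t_i-1 = 0$ together with monicity of $f_1$ (resp. $g_1$) forces $f_1 = 1$ (resp. $g_1 = 1$), and the argument above still goes through verbatim. One should also note that the hypotheses implicitly guarantee the $\gcd$'s are well-defined (the pairs $(f_1,f_2)$ and $(g_1,g_2)$ cannot both be zero since $f_1,g_1$ are monic, hence nonzero), and that the monic convention fixed just before the lemma statement is exactly what is needed to pin down $f_1=g_1$ from mutual divisibility.
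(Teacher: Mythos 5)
Your proof is correct and follows essentially the same route as the paper: the degree bound $\deg(f_1g_2-f_2g_1)\le t_1+t_2-2\le s-1$ against the degree-$s$ minimal polynomial of $\alpha$ forces the polynomial identity $f_1g_2=f_2g_1$, after which a coprimality argument in $\F_q[X]$ finishes. Your symmetric mutual-divisibility step ($f_1\mid g_1$ and $g_1\mid f_1$) is a slight streamlining of the paper's version, which separately treats the case $f_2=0$ or $g_2=0$ and then writes $g_1=hf_1$ to conclude $h\mid\gcd(g_1,g_2)=1$.
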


\begin{proof} Note that $1< s\leq h$ and $s\mid h$, and $\F_q(\alpha)\cong \F_q[X]/(z(X))$ where $z$ is a polynomial of degree $s$, so every element of $\F_q(\alpha)$ has a unique representation as a polynomial in $\alpha$ of degree at most $s-1$. We have that $f_1(\alpha)g_2(\alpha)=f_2(\alpha)g_1(\alpha)$ in $\F_q(\alpha)$. So $$f_1(X)g_2(X)=f_2(X)g_1(X)+t(X)z(X)$$ for some polynomial $t(X)$. But $\deg(z)=s$ and $\deg(f_1g_2-f_2g_1)\leq s-1$ so $t(X)=0$, and $f_1(X)g_2(X)=f_2(X)g_1(X)$. Recall that $\gcd(f_1,f_2)=\gcd(g_1,g_2)=1$ and $f_1,g_1$ are monic. So if $f_2=0$, then we immediately have that $f_1=1$, $g_2=0$, $g_1=1$. The same holds true if $g_2=0$. So we may assume that $f_2\neq 0,g_2\neq 0$ and without loss of generality, assume that $\deg(f_1)\leq\deg(g_1)$. 

Hence we have $f_1g_2=f_2g_1$ with $f_2\neq 0,g_2\neq 0$, $\gcd(f_1,f_2)=1$ and $\deg(f_1)\leq\deg(g_1)$, so we find that $f_1\mid g_1$. This implies that  $g_1=h f_1$, for some polynomial $h$. Then $f_1g_2=h f_2f_1$, and hence, $g_2=h f_2$. Since $h\mid g_1$ and $h\mid g_2$, $h\mid \gcd(g_1,g_2)=1$, so $h\in \F_q$. Since $f_1$ and $g_1$ are monic, $h =1$ which proves our claim.
\end{proof}

\begin{definition}
An $s$-tuple of polynomials $(f_1,\dots,f_s)$ is said to be in \textit{reduced form} if the first non-zero polynomial is monic and gcd$(f_1,\dots,f_s)=1$. We will also say that a projective point $P=\langle(f_1(\alpha),\dots,f_s(\alpha))\rangle$ is in \textit{reduced form}  if $(f_1,\dots,f_s)$ is in  reduced form.
\end{definition}

\begin{lemma}\label{lem2} The linear set $L$ constructed in Lemma \ref{lem1} is given by the points with coordinates
\[\langle(\mu_{1,t_1}+\alpha \mu_{1,t_{1}-1}+\dots+\alpha^{t_{1}-1}\mu_{1,{1}},\mu_{2,{t_2}}+\alpha \mu_{2,{t_{2}-1}}+\dots+\alpha^{t_{2}-1}\mu_{2,{1}})\rangle\]
where $\mu_{i,j}$ are arbitrary elements of $\F_q$, not all zero, $1\le t_1, t_2$, $t_1+t_2=k$, $\alpha\in \mathbb{F}_{q^h}\backslash\mathbb{F}_q$ is a an element generating a degree $s$-extension of $\F_q$ (i.e. $[\F_q(\alpha):\F_q]=s$), with $k-1\leq s$, $s\mid h$.

Moreover, if then there is a bijection between the points of the linear set $L$ and the set \[S=\{(f_1,f_2)\mid \deg(f_1)\leq t_1-1,\deg(f_2)\leq t_2-1\text{ and } (f_1,f_2) \text{ is in reduced form,  $1\le t_1\leq t_2$ }\},\] where $f_1$ and $f_2$ are polynomials over $\mathbb{F}_q$.
\end{lemma}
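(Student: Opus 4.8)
The first part (the explicit coordinate description of $L$) follows directly from the projection set-up in Construction \ref{cons1}. The plan is to take a point $\langle(\lambda_{1,1},\dots,\lambda_{1,t_1},\lambda_{2,1},\dots,\lambda_{2,t_2})\rangle$ of $\Sigma$ (so all $\lambda_{i,j}\in\F_q$, not all zero) and compute its image under $p_{\Pi,\Omega}$, i.e. intersect $\langle P,\Pi\rangle$ with $\Omega=\langle\langle\textbf{e}_{\textbf{1},\textbf{t}_\textbf{1}}\rangle,\langle\textbf{e}_{\textbf{2},\textbf{t}_\textbf{2}}\rangle\rangle$. Using the spanning vectors $\textbf{e}_{\textbf{i},\textbf{1}}-\alpha^{j-1}\textbf{e}_{\textbf{i},\textbf{j}}$ of $\pi_i$, one reduces $P$ modulo $\Pi$: each $\lambda_{i,j}\textbf{e}_{\textbf{i},\textbf{j}}$ with $j<t_i$ can be traded, via $\textbf{e}_{\textbf{i},\textbf{1}}\equiv\alpha^{j-1}\textbf{e}_{\textbf{i},\textbf{j}}\pmod{\Pi}$ more precisely via the given generators, so that the $\textbf{e}_{\textbf{i},\textbf{t}_\textbf{i}}$-coordinate accumulates $\lambda_{i,t_i}+\alpha\lambda_{i,t_i-1}+\dots+\alpha^{t_i-1}\lambda_{i,1}$. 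This is a short bookkeeping computation; renaming $\mu_{i,j}:=\lambda_{i,j}$ gives the stated coordinates. I would also note that not all $\mu_{i,j}$ can be zero since $P\ne 0$, and conversely every such tuple arises, so this is exactly the image linear set.

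For the bijection with $S$: writing $f_i(X)=\mu_{i,t_i}+\mu_{i,t_i-1}X+\dots+\mu_{i,1}X^{t_i-1}$, the point is $\langle(f_1(\alpha),f_2(\alpha))\rangle$ with $\deg f_i\le t_i-1$, and $(f_1,f_2)\ne(0,0)$. So $L$ is precisely $\{\langle(f_1(\alpha),f_2(\alpha))\rangle : \deg f_1\le t_1-1,\ \deg f_2\le t_2-1,\ (f_1,f_2)\ne(0,0)\}$. The map sending a reduced pair $(f_1,f_2)$ to this point is clearly into $L$. For surjectivity: given any $(f_1,f_2)\ne(0,0)$ of the right degrees, let $d=\gcd(f_1,f_2)$ (monic); then $(f_1/d,f_2/d)$ still has the right degrees and represents the same projective point up to the scalar $d(\alpha)$, which is nonzero (since $\deg d\le t_1-1\le s-1$ and $d$ is a nonzero polynomial, $d(\alpha)\ne 0$ by the minimality of the degree of $\alpha$'s minimal polynomial — here one uses $t_1-1<s$). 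After further scaling by the leading coefficient of the first nonzero component we land in reduced form, so the map is onto.

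The substantive point, and the one I expect to be the main obstacle, is injectivity, and this is exactly where Lemma \ref{lemnew} is designed to be applied. Suppose $(f_1,f_2)$ and $(g_1,g_2)$ are both in reduced form with the prescribed degree bounds and $\langle(f_1(\alpha),f_2(\alpha))\rangle=\langle(g_1(\alpha),g_2(\alpha))\rangle$. Then there is $c\in\F_{q^h}^{*}$ with $f_i(\alpha)=c\,g_i(\alpha)$ for $i=1,2$; in particular $c\in\F_q(\alpha)$ (one sees this from whichever $g_i(\alpha)\ne 0$), so $f_1(\alpha)g_2(\alpha)=f_2(\alpha)g_1(\alpha)$ in $\F_q(\alpha)$. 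The hypotheses of Lemma \ref{lemnew} hold: $f_1,g_1$ may be taken monic after noting that (if $f_1\ne0$, $g_1\ne0$) reduced form already forces the first nonzero component monic, and one handles the cases $f_1=0$ or $g_1=0$ separately — if $f_1=0$ then reducedness makes $f_2$ monic and $g_1(\alpha)f_2(\alpha)=0$ forces $g_1=0$, hence $f_2(\alpha)=c\,g_2(\alpha)$ with both monic, giving $f_2=g_2$; the degree condition $t_1+t_2=k\le s+1$ is guaranteed by $k-1\le s$; and $\gcd(f_1,f_2)=\gcd(g_1,g_2)=1$ is precisely reducedness. Lemma \ref{lemnew} then yields $f_1=g_1$, $f_2=g_2$. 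I would present the case analysis on which components vanish carefully, since the definition of "reduced form" makes the \emph{first} nonzero polynomial monic rather than $f_1$, and aligning this with the "$f_1,g_1$ monic" hypothesis of Lemma \ref{lemnew} requires the assumption $t_1\le t_2$ so that a relabelling does not disturb the degree bounds.
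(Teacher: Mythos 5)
Your proposal is correct and follows essentially the same route as the paper: compute the image of a point of $\Sigma$ under the projection to obtain the coordinate form, then pass to reduced forms by dividing out the gcd for surjectivity and invoke Lemma \ref{lemnew} for injectivity. You are in fact slightly more careful than the paper in spelling out the case $f_1=0$ (where reduced form makes $f_2$ monic rather than $f_1$, so Lemma \ref{lemnew} does not apply verbatim), a point the paper's proof passes over silently.
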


\begin{proof}  It is clear that the line $\Omega$ and the subspace $\Pi$ in Lemma \ref{lem1} are constructed such that a point $\langle(\mu_1,\dots,\mu_k)\rangle=\langle(\mu_{1,1},\dots,\mu_{2,{t_2}})\rangle$ of $\Sigma$ is projected to the point $$\langle f_1(\alpha)\textbf{e}_{\textbf{1}\textbf{,}{\textbf{t}_\textbf{1}}}+f_2(\alpha)\textbf{e}_{\textbf{2}\textbf{,}{\textbf{t}_\textbf{2}}}\rangle$$ in $\Omega$, where
$$f_i(\alpha)=\mu_{i,{t_i}}+\alpha \mu_{i,{t_{i}-1}}+\dots+\alpha^{t_{i}-1}\mu_{i,{1}}$$for each $i\in\{1,2\}$.

This implies that the linear set is indeed given by the points with coordinates \[\langle (\mu_{1,{t_1}}+\alpha \mu_{1,{t_{1}-1}}+\dots+\alpha^{t_{1}-1}\mu_{1,{1}},\mu_{2,{t_2}}+\alpha \mu_{2,{t_{2}-1}}+\dots+\alpha^{t_{2}-1}\mu_{2,{1}})\rangle.\]

Without loss of generality, we can rearrange the coordinates for $L$ such that $t_1\leq t_2$.

 By dividing out a possible non-trivial gcd of the polynomials $f_1,f_2$, we see that every point in the above set can be put in reduced form $(f_1,f_2)$ for some $f_1$ with $deg(f_1)\leq t_1-1$ and $f_2$ with $\deg(f_2)\leq t_2-1$, hence, every point of $L$ gives rise to an element in $S$. Vice versa, if the reduced forms $(f_1,f_2)$ and $(g_1,g_2)$ of $S$ represent the same point in the constructed linear set,
then we have that $f_1 (\alpha)g_2 (\alpha)=f_2 (\alpha)g_1(\alpha)$. By Lemma \ref{lemnew}, we find that $f_1=g_1$ and $f_2=g_2$. Hence, we obtain a bijection between the points of $L$ and the reduced forms contained in $S$.
\end{proof}

In the next lemma, we will count the number of couples of polynomials in reduced form. It was pointed out to us that the problem of counting the number of coprime polynomials was solved in \cite[Proposition 3]{corteel} (see also \cite{benjamin}). Since the proof below is elementary and has a different flavour than the cited ones, we include it here.

\begin{lemma}\label{newlem1}
The number of elements in the set \[S=\{(f_1,f_2)\mid \deg(f_1)\leq t_1-1,\deg(f_2)\leq t_2-1,1\le t_1\leq t_2\text{ and } (f_1,f_2) \text{ is in reduced form}\}\] is $q^{t_1+t_2-1}+1$, where $f_1$ and $f_2$ are polynomials over $\mathbb{F}_q$.
\end{lemma}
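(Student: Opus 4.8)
The plan is to count the pairs $(f_1,f_2)$ with $\deg f_1\le t_1-1$, $\deg f_2\le t_2-1$ and $\gcd(f_1,f_2)=1$ (the "monic first nonzero polynomial" normalisation only affects projective scaling, so I will first count coprime pairs up to the $(q-1)$ scalings and then adjust, or equivalently count ordered coprime pairs and divide out). Write $N(t_1,t_2)$ for the number of \emph{ordered} pairs of polynomials over $\F_q$ with $\deg f_1\le t_1-1$, $\deg f_2\le t_2-1$, and $\gcd(f_1,f_2)=1$; the total number of pairs (coprime or not) in the same degree box is $q^{t_1}\cdot q^{t_2}=q^{t_1+t_2}$. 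The key idea is a standard gcd-stratification: every pair $(f_1,f_2)$ with at least one of them nonzero factors uniquely as $(d\cdot g_1,\ d\cdot g_2)$ where $d=\gcd(f_1,f_2)$ is monic of some degree $e$ and $(g_1,g_2)$ is coprime. If $\deg d=e$, then $g_1$ ranges over polynomials of degree $\le t_1-1-e$ and $g_2$ over degree $\le t_2-1-e$; the number of monic $d$ of degree $e$ is $q^e$. This yields the recursion
\begin{equation}
q^{t_1+t_2}-1=\sum_{e=0}^{t_1-1} q^{e}\,\bigl(N(t_1-e,t_2-e)-1\bigr)+\bigl(\text{correction for the zero polynomial}\bigr),
\end{equation}
where the $-1$'s account for the pair $(0,0)$, which is excluded from coprime pairs, and one must be slightly careful about pairs where $f_1=0$ (then $\gcd$ is $f_2$, monic part) — these contribute the "$+1$" phenomenon. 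I would handle the boundary bookkeeping by defining $N$ so that $(0,0)$ is never counted and a pair like $(0,f_2)$ is coprime iff $f_2$ is a (nonzero) constant.

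After setting up the bookkeeping cleanly, the cleanest route is induction on $t_1$ (the smaller parameter). The base case $t_1=1$: then $f_1\in\F_q$ is a constant, and $(f_1,f_2)$ is coprime iff $f_1\neq 0$ or ($f_1=0$ and $f_2$ is a nonzero constant); counting these and dividing by the $q-1$ projective scalings gives exactly $q^{t_2-1}+1$ — indeed, a point is $\langle(\mu,f_2(\alpha))\rangle$ with $\mu\in\F_q$, which is the model of a \emph{club}, of size $q^{t_2-1}+1$, so this case is essentially known. For the inductive step, peel off the gcd: the pairs with $\gcd$ of degree exactly $e\ge 1$ are in bijection (via $(f_1,f_2)\mapsto(d,(f_1/d,f_2/d))$) with pairs consisting of a monic $d$ of degree $e$ and a coprime pair in the box $(t_1-e,t_2-e)$; summing over $e$ and using the inductive formula collapses the sum geometrically. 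Concretely, in projective terms the identity to prove becomes
\begin{equation}
q^{t_1+t_2-1}+1=\frac{q^{t_1+t_2}-1}{q-1}-\sum_{e=1}^{t_1-1}q^{e}\,\Bigl(\tfrac{q^{t_1+t_2-2e-1}\,(q-1)+\ \cdots}{\ }\Bigr),
\end{equation}
and the telescoping/geometric-series collapse is a short computation; I will present it as a one-line sum rather than grinding it.

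The main obstacle, and the only genuinely delicate point, is the boundary behaviour when one of the polynomials is allowed to be zero: the set $S$ as defined uses the normalisation "first nonzero polynomial is monic," and the pair $(0, f_2)$ with $f_2$ constant is exactly the one that produces the extra "$+1$" over the naive count $q^{t_1+t_2-1}$. I expect to spend most of the care there — making sure the recursion's base terms and the $e=0$ stratum are counted consistently with the reduced-form convention, and that the projective scaling is divided out exactly once. Everything else (the unique gcd factorisation, the count $q^e$ of monic polynomials of degree $e$, the geometric summation) is routine. An alternative to induction, which I may mention, is a direct inclusion–exclusion / Möbius argument over monic divisors — but since Lemma~\ref{lem2} already guarantees the bijection with $L$, and clubs handle $t_1=1$, the inductive peeling of the gcd is the most transparent and self-contained option, matching the "elementary, different flavour" remark preceding the statement.
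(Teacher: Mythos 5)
Your overall strategy -- factor each pair uniquely as a monic gcd of degree $e$ times a coprime pair, use that there are $q^e$ monic polynomials of degree $e$, and induct -- is exactly the mechanism of the paper's proof, so the key idea is present. The paper organises it slightly differently: it first observes that the only reduced form with $f_1=0$ is $(0,1)$ (this is the entire source of the ``$+1$''), and then stratifies the remaining reduced forms by the exact degree $m$ of the monic polynomial $f_1$, proving by induction on $m$ that the stratum $R_{m,\le n}$ has size $q^{m+n+1}-q^{m+n}$ before summing a telescoping series. That early disposal of the $f_1=0$ case is precisely what makes the bookkeeping painless.

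As written, though, your argument has concrete gaps rather than just unfinished polish. First, the recursion $q^{t_1+t_2}-1=\sum_{e=0}^{t_1-1}q^e(N(t_1-e,t_2-e)-1)+(\text{correction})$ does not close: a pair $(0,f_2)$ with $t_1\le\deg f_2\le t_2-1$ has monic gcd of degree $\deg f_2\ge t_1$, so it lies in no stratum $e\le t_1-1$, and the ``correction'' term that is supposed to absorb these is never specified; since $t_1\le t_2$ this stratum is generally nonempty, and without it the identity you intend to telescope is false. The fix is the paper's: remove $f_1=0$ from the count at the outset (it contributes exactly one reduced form) and only ever stratify pairs with $f_1\ne 0$. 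Second, the displayed ``identity to prove'' contains a literal $\cdots$ and an empty denominator, so the geometric-series collapse that carries the whole inductive step is asserted but not performed. Third, the base case $t_1=1$ is miscomputed: there are $(q-1)q^{t_2}$ ordered coprime pairs with $f_1$ a nonzero constant plus $q-1$ pairs $(0,c)$, giving $q^{t_2}+1=q^{t_1+t_2-1}+1$ reduced forms after dividing by $q-1$, not $q^{t_2-1}+1$. None of these is fatal to the method, but each must be repaired before the proof stands.
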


\begin{proof}Note that if $f_1=0,$ then for $(f_1,f_2)$ to be in reduced form $f_2$ has to be equal to $1$. So in order to determine the cardinality of $S$ we just need to count the number of couples for which $f_1\ne0$ and then add one.
	
Let $P_{n,\le m}$ be the set of couples $(f_1,f_2)$ with $f_1$ monic of degree $n$ and $\deg(f_2)\leq m$. It is clear that $P_{n,\leq m}$ has size $q^{m+n+1}$. 
Let $R_{n,\le m}$ be the set of couples $(f_1,f_2)$ in $P_{n,\le m}$ in reduced form.

We see that $$S=\cup_{i=0}^{t_1-1} R_{i,\le t_2-1}\cup\{(0,1)\}.$$
Now for a couple $(f_1,f_2)$ in $P_{n,\leq m}$ with gcd$(f_1,f_2)=g$, where $g$ has degree $d$, we see that $(f_1,f_2)$ has reduced form $(f_1',f_2')$ with $\deg(f_1')=n-d$, and hence, determines an element in $R_{n-d, \le m-d}$. 
Vice versa, for a couple $(f_1',f_2')$ in $R_{n-d,\le m-d}$ and a monic polynomial $g'$ of degree $d$, we see that $(f_1'g',f_2'g')$ is in $P_{n,\le m}$. Since there are $q^d$ monic polynomials of degree $d$, every couple $(f_1',f_2')$ in $R_{n-d,\le m-d}$ gives rise to $q^d$ different elements in $P_{n,\leq m}$.

First note that $R_{0,\leq n_0}$ has size $q^{n_0+1}$.
We will show by induction that the number of couples in $R_{m,\le n_m}$ $(1\leq m\le n_m\le t_1-1)$ is:
\begin{equation}
q^{m+n_m+1}-q^{m+n_m}.\label{eq1}
\end{equation}

Let $m=1$. There are $q^{n_1+2}$ couples $(f_1,f_2)$ in $P_{1,\le n_1}$. Of those, we need to exclude the couples that do not belong to $R_{1,\le n_1}$. 
	
As explained above, the excluded couples all arise from the $q^{n_1}$ couples in $R_{0,\le n_1-1}$ which each give rise to $q$ different elements in $P_{1,\le n_1}\setminus R_{1,\le n_1}$. Hence, we indeed find $q^{n_1+2}-q^{n_1+1}$ couples in $R_{1,\le n_1}$.
	
Now assume that the induction hypothesis holds for all $R_{m,\le n_m}$ with $m\leq m_0-1$ for some $1\leq m_0-1\leq t_1-2$. 
	
We will show that the number of elements in $R_{m_0,\le n_{m_0}}$ is $q^{m_0+n_{m_0}+1}-q^{m_0+n_{m_0}}$. 
We know that the number of elements in $P_{m_0,\le n_{m_0}}$ is $q^{m_0+n_{m_0}+1}$. As before, we need to subtract from these all couples arising from elements of $R_{m_0-1,\le n_{m_0}-1}$, which each gives rise to $q$ couples in $P_{m_0,\le n_{m_0}}\setminus R_{m_0,\le n_{m_0}}$, all couples from elements of $R_{m_0-2,\le n_{m_0}-2}$, each giving rise to $q^2$ couples in $P_{m_0,\le n_{m_0}}\setminus R_{m_0,\le n_{m_0}}$, and in general, all couples from $R_{m_0-d,\le n_{m_0}-d}$, each giving rise to $q^d$ couples in $P_{m_0,\le n_{m_0}}\setminus R_{m_0,\le n_{m_0}}$. Note that all excluded couples are distinct as every element of  $P_{m_0,\leq n_{m_0}}$ has a unique reduced form. Hence,

$$|R_{m_0,\le n_{m_0}}|=$$ $$q^{m_0+n_{m_0}+1}-q|R_{m_0-1,\le n_{m_0}-1}|-q^2|R_{m_0-2,\le n_{m_0}-2}|-\cdots q^{m_0-1}|R_{1,\le n_{m_0}-m_0+1}|-q^{m_0}|R_{0,\le n_{m_0}-m_0}|.$$
	
Using the induction hypothesis, we see that
\[|R_{m_0,\le n_{m_0}}|=q^{m_0+n_{m_0}+1}-q(q^{m_0+n_{m_0}-1}-q^{m_0+n_{m_0}-2})-q^2(q^{m_0+n_{m_0}-3}-q^{m_0+n_{m_0}-4})-\cdots\]\[ q^{m_0-1}(q^{n_{m_0}-m_0+3}-q^{n_{m_0}-m_0+2})-q^{m_0}q^{n_{m_0}-m_0+1}=q^{m_0+n_{m_0}+1}-q^{m_0+n_{m_0}},\]
which we needed to show.
	
Now recall that $S=\cup_{i=0}^{t_1-1} R_{i,\le t_2-1}\cup\{(0,1)\}$, $R_{i,\le t_2-1}\cap R_{j,\le t_2-1}=\emptyset$ when $i\ne j$ and $(0,1)\notin R_{i,t_2-1}$ for any $i$.
	
Using $n_0=n_1=\dots=n_{t_1-1}=t_2-1$ in the expression found for $R_{m,\le n_m}$ and summing for $m$ between $0$ and $t_1-1$, we get that there are $(q^{t_1+t_2-1}-q^{t_1+t_2-2})+(q^{t_1+t_2-2}-q^{t_1+t_2-3})+\dots+(q^{t_2+1}-q^{t_2})+q^{t_2}+1=q^{k-1}+1$ couples in $S$. 	
\end{proof}

\begin{theorem}\label{thm1}Let $\alpha\in \F_{q^h}\setminus\F_q$ be an element generating a degree $s$-extension of $\F_q$ (i.e. $[\F_q(\alpha):\F_q]=s$) and

\[ L=\langle(\mu_{1,{t_1}}+\alpha \mu_{1,{t_{1}-1}}+\dots+\alpha^{t_{1}-1}\mu_{1,{1}},\mu_{2,{t_2}}+\alpha \mu_{2,{t_{2}-1}}+\dots+\alpha^{t_{2}-1}\mu_{2,{1}})\rangle\]
where $\mu_{i_j}$ are arbitrary elements of $\F_q$, not all zero, $1\leq t_1,t_2$, $t_1+t_2=k\leq s+1$. Then $L$ is an $\F_q$-linear set of $\PG(1,q^h)$ of rank $k$ with $q^{k-1}+1$ points. Let $t_1\leq t_2$, then there is one point of weight $t_2$ different from $q^{t_2-t_1+1}$ points of weight $t_1$ and $q^{k-2i+1}-q^{k-2i-1}$ points of weight $i$ for $i\in \{1,2,\dots,t_1-1\}$.
\end{theorem}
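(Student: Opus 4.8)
Most of Theorem \ref{thm1} is already in hand: Lemma \ref{lem1} gives that $L$ is an $\F_q$-linear set of rank $k$, Lemma \ref{lem2} gives the bijection between points of $L$ and reduced pairs $(f_1,f_2)$ with $\deg f_1\le t_1-1$, $\deg f_2\le t_2-1$, and Lemma \ref{newlem1} counts these as $q^{k-1}+1$. So the only genuinely new content is the \textbf{weight distribution}, and that is where I would concentrate. The key observation is that, via the projection description, the weight of a point $P\in L$ is one more than the projective dimension of the preimage $p_{\Pi,\Omega}^{-1}(P)\cap\Sigma$, equivalently the dimension of the $\F_q$-subspace of $\Sigma$ (the $\F_q$-points) mapping onto the line-vector $(f_1(\alpha),f_2(\alpha))$ up to $\F_{q^h}$-scalar. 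So I would fix a point $P$ of $L$ with reduced form $(f_1,f_2)$ and ask: for how many pairs $(\mu_{1,1},\dots,\mu_{2,t_2})\in\F_q^{k}$ (up to $\F_q$-scalar) is $(g_1(\alpha),g_2(\alpha))$ an $\F_{q^h}$-multiple of $(f_1(\alpha),f_2(\alpha))$, where $g_i$ is the polynomial built from the $\mu$'s as in Lemma \ref{lem2}?

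\textbf{Main computation.} The condition $g_1(\alpha)f_2(\alpha)=g_2(\alpha)f_1(\alpha)$ is, by the argument of Lemma \ref{lemnew} (the degree bound $t_1+t_2=k\le s+1$ is exactly what is needed to pass from an identity in $\F_q(\alpha)$ to an identity of polynomials), equivalent to the polynomial identity $g_1 f_2 = g_2 f_1$ in $\F_q[X]$. Writing $(f_1,f_2)$ in reduced form with $d:=\deg\gcd$ built back in: if $(f_1,f_2)$ is the reduced form and $f_1$ has degree $a\le t_1-1$, $f_2$ has degree $\le t_2-1$, then $g_1 f_2=g_2 f_1$ with $\gcd(f_1,f_2)=1$ forces $f_1\mid g_1$, so $g_1=f_1 h$ and $g_2=f_2 h$ for a common polynomial $h$, of degree at most $\min(t_1-1-\deg f_1,\ t_2-1-\deg f_2)$. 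Thus the $\F_q$-subspace of $\Sigma$ over $P$ corresponds bijectively to the $\F_q$-space of such multipliers $h$, whose dimension is $\min(t_1-\deg f_1,\ t_2-\deg f_2)$, hence the weight of $P$ is exactly $\min(t_1-\deg f_1,\ t_2-\deg f_2)$. I would record this as the structural heart of the proof.

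\textbf{Counting by weight.} Given the weight formula $w(P)=\min(t_1-\deg f_1,\ t_2-\deg f_2)$, the distribution follows by counting reduced pairs with a prescribed value of this minimum. For $1\le i\le t_1-1$, a point has weight $\ge i$ iff $\deg f_1\le t_1-i$ and $\deg f_2\le t_2-i$; the number of reduced pairs satisfying these two degree bounds is, by Lemma \ref{newlem1} applied with $t_1$ replaced by $t_1-i+1$ and $t_2$ by $t_2-i+1$, equal to $q^{(t_1-i+1)+(t_2-i+1)-1}+1=q^{k-2i+1}+1$. Subtracting consecutive such counts gives $(q^{k-2i+1}+1)-(q^{k-2i-1}+1)=q^{k-2i+1}-q^{k-2i-1}$ points of weight exactly $i$ for $i\in\{1,\dots,t_1-2\}$; for $i=t_1-1$ one must treat the boundary carefully, since "$\deg f_1\le t_1-i = 1$'' and the subtraction involves the pairs with $\deg f_1\le 0$, i.e. $f_1$ constant. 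Here the minimum $\min(t_1-\deg f_1, t_2-\deg f_2)$ can exceed $t_1-1$: when $f_1$ is constant (necessarily $f_1=0$ or $f_1=1$ in reduced form) the weight becomes $t_1$ or larger, and one checks directly that $(f_1,f_2)=(1, *)$ with $\deg f_2\le t_2-t_1$ (so $t_2-\deg f_2\ge t_1$) plus the single pair $(0,1)$ gives exactly the points of weight $\ge t_1$; of these, $(0,1)$ has weight $t_2$ (preimage is the full $t_2$-dimensional coordinate block) and the $q^{t_2-t_1+1}$ pairs $(1,f_2)$ with $\deg f_2\le t_2-t_1$ have weight exactly $t_1$. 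A sanity check that all counts sum to $q^{k-1}+1$ closes the argument.

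\textbf{Expected obstacle.} The routine part is the degree-bound bookkeeping; the part needing care is the boundary case $i=t_1-1$ and the identification of the heaviest point, because there the naive formula $w(P)=\min(t_1-\deg f_1,t_2-\deg f_2)$ interacts with the reduced-form normalization (a constant $f_1$ in reduced form is forced to be $0$ or $1$, collapsing many would-be multipliers), and one must separately verify that the unique point with $f_1=0$ really has weight $t_2$ rather than something larger, using $t_1+t_2=k\le s+1$ to rule out extra collapses. I would also double-check the edge case $t_1=1$, where the list of intermediate weights is empty and $L$ is a club with one point of weight $t_2=k-1$ and $q^{k-1}$ points of weight $1$, matching the formula with $q^{t_2-t_1+1}=q^{k-1}$.
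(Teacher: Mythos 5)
Your proposal is correct and follows essentially the same route as the paper: the size claim is delegated to Lemmas \ref{lem2} and \ref{newlem1}, and the weight of a point with reduced form $(f_1,f_2)$ is read off from the number of couples $(g_1,g_2)=(f_1h,f_2h)$ representing it, which is exactly the mechanism behind the paper's Table \ref{table1}. Your closed-form statement $w(P)=\min(t_1-\deg f_1,\,t_2-\deg f_2)$ and the telescoping count (apply Lemma \ref{newlem1} with $t_i$ replaced by $t_i-i+1$ to count points of weight at least $i$, then subtract) are a mild streamlining of the paper's column-by-column summation, and your boundary analysis for $i=t_1-1$, the $q^{t_2-t_1+1}$ points $(1,f_2)$ of weight $t_1$, and the single point $(0,1)$ of weight $t_2$ matches the paper's conclusion; the one cosmetic slip is the phrase that a constant $f_1$ makes the weight ``$t_1$ or larger'' --- for $f_1=1$ the weight is $\min(t_1,t_2-\deg f_2)\le t_1$, and it equals $t_1$ precisely when $\deg f_2\le t_2-t_1$, which is what your actual count uses.
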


\begin{proof} \text{Lemma }\ref{lem2} establishes a bijection between the points of the constructed linear set and the set \[S=\{(f_1,f_2)\mid \deg(f_1)\leq t_1-1,\deg(f_2)\leq t_2-1,t_1\leq t_2 \text{ and } (f_1,f_2) \text{ is in reduced form}\}.\] Lemma \ref{newlem1} shows that the number of elements in $S$ is equal to $q^{k-1}+1$. Hence the size of $L$ is $q^{k-1}+1$. We now turn our attention to the weight distribution of the points in $L$.
The point $\langle(0,1)\rangle$ clearly has weight $t_2$. To find the weight of the other points, we need to find out the number of times a point with reduced form in $S$ is determined by a couple in $T$, where 
$T=\cup_{i=0}^{t_1-1}P_{i,\leq t_2-1}$,  where $S, P_{i,\le j}$ and $R_{i,\le j}$ are as in Lemma \ref{newlem1}.
 
Let $(g_1,g_2)$ be the reduced form of a point $P$. We know that the weight of $P$ in $L$ is one if there is no couple $(h_1,h_2)$ in $T$ with $(h_1,h_2)\neq (g_1,g_2)$ and $P=\langle(h_1(\alpha),h_2(\alpha))\rangle$. It is easy to see that the weight of a point $P$ is $w$ when there are $q^{w-1}$ couples in $T$ whose reduced form is $(g_1,g_2)$.

This implies that all the points determined by couples in $R_{t_1-1,\leq t_2-1}$ are of weight $1$. The points with reduced form $(f_1,f_2)$ in $R_{t_1-2,\leq t_2-1}$ have weight $1$ if $\deg(f_2)=t_2-1$ and weight $2$ otherwise.

Likewise, we see that the points with reduced form $(f_1,f_2)$ in $R_{t_1-3,\leq t_2-1}$ have weight $1$ if the $\deg(f_2)=t_2-1$, have weight $2$ if $\deg(f_2)=t_2-2$ and weight $3$ if $\deg(f_2)=t_2-3$.

Table \ref{table1} summarises the condition on the degree of $f_2$ for obtaining different weight points when $f_1$ is non-zero. Here, we represent the degree of $f_i$ by $d_i$, for $i=1,2$.

\begin{table}
	\resizebox{\textwidth}{!}{
		\begin{tabular}{|c||c|c|ccc|c|c|}
			\hline 
			
			$d_1$& weight 1 & weight 2   &&$\dots$ & & weight $t_1-1$  & weight $t_1$  \\ 
			\hline
			\hline 
			$t_1-1$& $d_2\le t_2-1$  &0  &&$\dots$& &0  &0  \\ 

			\hline 
			$t_1-2$& $d_2= t_2-1$ & $d_2\le t_2-2$ &&$\dots$&   &0  &0  \\ 
	
			\hline 
			$t_1-3$& $d_2=t_2-1$ & $d_2=t_2-2$ &&$\dots$&   &  0&  0\\ 
	
			\hline 
	
			$\vdots$& $\vdots$ & $\vdots$ && $\ddots$ &&$\vdots$ & $\vdots$  \\ 
		
			\hline 
			1& $d_2=t_2-1$ &$d_2=t_2-2$  &   &$\dots$& & $d_2\le t_2-(t_1-1)$ & 0 \\ 
			\hline 
			0& $d_2=t_2-1$ & $d_2=t_2-2$ & &$\dots$& & $d_2=t_2-(t_1-1)$  & $d_2\le t_2-t_1$ \\ 
		
			\hline 
		\end{tabular}
	}
	\smallskip
	\caption{Conditions on the degree of $f_2$, $d_2$, for obtaining different weight points when $f_1$ is non-zero of degree $d_1$. }\label{table1}
\end{table}

By looking at the first column and using Equation (\ref{eq1}), we see that the total number of points of weight 1 is $(q^{t_1+t_2-1}-q^{t_1+t_2-2})+(q^{t_1+t_2-2}-2q^{t_1+t_2-3}+q^{t_1+t_2-4})+\dots+(q^{t_2+1}-2q^{t_2}+q^{t_2-1})+(q^{t_2}-q^{t_2-1})=q^{k-1}-q^{k-3},$ where we used that $k=t_1+t_2$. 

Similarly, the total number of points of weight 2 is $(q^{t_1+t_2-3}-q^{t_1+t_2-4})+(q^{t_1+t_2-4}-2q^{t_1+t_2-5}+q^{t_1+t_2-6})+\dots+(q^{t_2}-2q^{t_2-1}+q^{t_2-2})+(q^{t_2-1}-q^{t_2-2})=q^{k-3}-q^{k-5}.$

The total number of points of weight $t_1-1$ is
$q^{k-2t_1+3}-q^{k-2t_1+2}+q^{k-2t_1+2}-q^{k-2t_1+1}=q^{k-2t_1+3}-q^{k-2t_1+1}$ and the total number of points of weight $t_1$ is
$q^{k-2t_1+1}$.

In conclusion, we see that there are $q^{k-2i+1}-q^{k-2i-1}$ points of weight $i$ for $i\in \{1,2,\dots,t_1-1\}$, $q^{t_2-t_1+1}$ points of weight $t_1$ and 1 point, different from the previously counted points, of weight $t_2$.
\end{proof}

\begin{corollary} Let $2\le k\leq h$ and let $j$ be an integer between $k/2$ and $k-1$, then there is a linear set of rank $k$ in $\PG(1,q^h)$ of size $q^{k-1}+1$ whose heaviest point has weight $j$.
\end{corollary}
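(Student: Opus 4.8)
The plan is to obtain the corollary as a direct specialisation of Theorem~\ref{thm1}. Given $k$ with $2\le k\le h$ and an integer $j$ with $k/2\le j\le k-1$, I would apply Theorem~\ref{thm1} with the partition sizes $t_1:=k-j$ and $t_2:=j$, with $s:=h$, and with $\alpha$ any element of $\F_{q^h}$ generating $\F_{q^h}$ over $\F_q$ (such an $\alpha$ exists since $h\ge k\ge 2$, so $\F_{q^h}$ is a proper extension of $\F_q$).

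The first step is to verify the hypotheses of Theorem~\ref{thm1} for this choice. From $j\le k-1$ we get $t_1=k-j\ge 1$; from $k\ge 2$ and $j\ge k/2$ we get $t_2=j\ge 1$; clearly $t_1+t_2=k$; the condition $k\le s+1$ holds since $s=h\ge k$; and the constraint $k-1\le s$ likewise holds since $s=h\ge k>k-1$. Crucially, the hypothesis $j\ge k/2$ translates precisely into $t_1=k-j\le j=t_2$, which is exactly the ordering under which Theorem~\ref{thm1} records the weight distribution.

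The second step is to read off the conclusion. Theorem~\ref{thm1} then produces an $\F_q$-linear set $L$ of rank $k$ in $\PG(1,q^h)$ with $q^{k-1}+1$ points, whose occurring weights are $1,\dots,t_1-1$ (with $q^{k-2i+1}-q^{k-2i-1}$ points of weight $i$), together with $q^{t_2-t_1+1}$ points of weight $t_1$ and one point of weight $t_2$. Since $t_1\le t_2$, the largest weight occurring is $t_2=j$; in the boundary case $t_1=t_2$ (that is, $k$ even and $j=k/2$) this merely means the $q$ points of weight $t_1$ and the one extra point combine into $q+1$ points of weight $j$, which are still the heaviest. Hence the heaviest point of $L$ has weight exactly $j$, which completes the proof.

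I do not expect a genuine obstacle: everything reduces to bookkeeping once the substitution $t_1=k-j$, $t_2=j$ is made. The only points deserving a moment's attention are the translation of $j\ge k/2$ into the ordering $t_1\le t_2$ needed to invoke the weight list of Theorem~\ref{thm1}, and the observation that $t_2$ is genuinely the maximal weight occurring (immediate from that list, including the degenerate case $t_1=t_2$). If one additionally wanted these linear sets to be pairwise inequivalent as $j$ varies, that would require a separate argument, but it is not needed for the stated claim.
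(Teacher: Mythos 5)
Your proof is correct and is exactly the intended derivation: the paper states this corollary without proof as an immediate consequence of Theorem \ref{thm1}, obtained by setting $t_1=k-j$, $t_2=j$ and taking $\alpha$ a generator of $\F_{q^h}$ over $\F_q$ so that $s=h\ge k$. Your verification of the hypotheses and of the boundary case $t_1=t_2$ is accurate and complete.
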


\subsection{Linear sets of size $q^{k-1}+1$ in $\PG(1,q^h)$ for $k\in \{2,3,4,5,h\}$}

We give an overview of some special cases below. In a few cases, we are able to show that all linear sets of minimum size arise from our construction. 

\subsubsection{$k=h$}\label{trace}
For $h=k$ and partition sizes $1$ and $h-1$ the linear set constructed in Lemma \ref{lem1} is a club of rank $h$ in $\PG(1,q^h)$. Recall that two linear sets, say $L_1$ and $L_2$ in $\PG(l,q^h)$ are called {\em equivalent} if there is an element $\phi$ of $\PGammaL(l+1,q^h)$ such that $\phi(L_1)=L_2$.
Every $\F_q$-linear set of rank $h$ in $\PG(1,q^h)$ containing a point of weight $h$ is equivalent to the set $\{\langle(x,\mathrm{Tr}(x))\rangle\mid x\in \F_{q^n}\}$ (see e.g. \cite[Theorem 2.3]{maarten}). Moreover, it has been shown in \cite[Theorem 3.7]{classes} that this linear set is {\em simple}: if some vector space $U_f=\{(x,f(x))\mid x\in \F_{q^h}\}$ defines the linear set as the vector space $V=\{(x,\mathrm{Tr}(x))\mid x\in \F_{q^n}\}$, then $U_f=\lambda V$ for some $\lambda\in \F_q^*$. 

For $h=k$ and partition sizes $2$ and $h-2$, the linear set constructed in Lemma \ref{lem1} is clearly equivalent to the R\'edei line of the linear set constructed by Lunardon and Polverino \cite[Theorem 3]{polverino} (See \eqref{Lu-Po} in Subsection \ref{introline}). As far as the authors are aware, for all choices of $k\neq h$ and partitions not containing a set of size $1$ or $2$, the linear set constructed in Theorem \ref{thm1} did not appear in the literature.

\subsubsection{$k=2,3$}\label{k23}

Now consider a linear set in $\PG(1,q^h)$ of size $q^{k-1}+1$ with $k=2$. This is simply a subline which indeed is projectively equivalent to the set $\{\langle(\mu_1,\mu_2)\rangle|\mu_1,\mu_2\in \F_q)\}$, showing that it arises from Theorem \ref{thm1} by putting $t_1=t_2=1$.

Every linear set of rank $3$ of size $q^2+1$ corresponds to the projection of a subplane $\Sigma\cong \PG(2,q)$ contained in $\PG(2,q^h)$ from a point $\Pi$ onto a line $\Omega$, such that the point $\Pi$ lies on an extended line of $\Sigma$. Without loss of generality, $\Sigma$ is the canonical subgeometry with coordinates in $\F_q$, the point $\Pi$ is lying on the extended line with equation $y=0$, and hence, is of the form $\langle(1,0,\alpha)\rangle$ for some $\alpha\in \F_{q^h}\setminus \F_{q}$. When projecting $\Sigma$ onto the line $x=0$, we see that the points in the linear set $L$ have coordinates of the form $\langle(\mu_1,\mu_2+\mu_3\alpha)\rangle$, and hence, can be obtained as in Theorem \ref{thm1} by putting $t_1=1,t_2=2$. Note that if $\F_q(\alpha)=\F_{q^2}$, then $L\cong \PG(1,q^2)$, whereas otherwise, $L\not \cong \PG(1,q^2)$. This shows that it is possible to create two non-equivalent linear sets $L_1$ and $L_2$ from our construction. More explicitely, consider the sets $L_1=\{\langle(\mu_1,\mu_2+\mu_3\alpha)\rangle\mid \mu_1,\mu_2,\mu_3\in \F_p, (\mu_1,\mu_2,\mu_3)\neq (0,0,0)\}$ and $L_2=\{\langle(\nu_1,\nu_2+\nu_3\beta)\rangle\mid \nu_1,\nu_2,\nu_3\in \F_p, (\nu_1,\nu_2,\nu_3)\neq (0,0,0)\}$ in $\PG(1,p^6)$, $p$ prime, where $[\F_p(\alpha):\F_p]=2$ and $[\F_p(\beta):\F_p]\in \{3,6\}$. Now let $\phi$ be any element of $\PGammaL(2,p^h)\cong \mathrm{PGL}(2,p^h)\rtimes\mathrm{Aut}(\F_p)$. We see that the cross-ratio of every four points in $L_1$ is contained in $\F_{p}(\alpha)=\F_{p^2}$. Recall that $\mathrm{PGL}(2,p^h)$ preserves cross-ratios, and every field automorphism of $\F_{p^h}$ is preserves subfields. Hence, for every four points in the set $\phi(L_1)$, the cross-ratio will be an element of $\F_{p^2}$. But we see that $\langle (1,0)\rangle,\langle(0,1)\rangle,\langle(1,1)\rangle,\langle(1,\beta)\rangle$ are contained in $L_2$, and the cross-ratio of those four points is $\beta\notin \F_{p^2}$. Hence, there cannot exist an element $\phi\in \PGammaL(2,p^h)$ such that $\phi(L_1)=L_2$.
\subsubsection{$k=4$}

For a linear set of rank $k$ of size $q^{k-1}+1$, we have that \begin{align}\sum_{i=1}^{k}x_i=q^{k-1}+1\label{trivial2} \end{align} and
\begin{align}\sum_{i=1}^{k}x_i\frac{q^i-1}{q-1}=\frac{q^k-1}{q-1}\label{trivial},\end{align} where $x_i$ is the number of points of weight $i$. It follows from \eqref{trivial2} and \eqref{trivial}  that if $L$ is a linear set of rank $4$ of size $q^3+1$, then either
\begin{enumerate}
\item[(a)] $L$ contains $1$ point of weight $3$ and the other $q^3$ points have weight one (i.e. $L$ is a club).
\item[(b)] $L$ contains $q+1$ points of weight $2$ and the other $q^3-q$ points have weight one.
\end{enumerate}
We will show in Proposition \ref{prop1} that all linear sets of rank $4$ of size $q^3+1$ of type (b) arise from our construction.

\begin{proposition} \label{prop1}Let $L$ be a linear set of rank $4$  in $\PG(1,q^h)$ that has size $q^3+1$ and contains $q+1$ points of weight $2$. Then $L$ can be obtained from the construction of Theorem \ref{thm1}.
\end{proposition}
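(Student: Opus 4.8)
The plan is to argue directly on the defining $\F_q$-subspace. Write $L=L_U$ with $U$ a $4$-dimensional $\F_q$-subspace of $\F_{q^h}^2$, and recall that for a point $Q=\langle(a,b)\rangle$ of $\PG(1,q^h)$ the subspace $U_Q$ equals $U\cap\langle(a,b)\rangle_{q^h}$ and has $\F_q$-dimension equal to the weight of $Q$, and that $U_Q\cap U_{Q'}=\{0\}$ whenever $Q\ne Q'$ lie in $L$ (a common nonzero vector would span both points). Hence the weight-$2$ points of $L$ give $2$-dimensional subspaces of $U$ that intersect pairwise in $\{0\}$, and any two of them already span $U$. Since $L$ contains $q+1\ge 3$ points of weight $2$, I would first apply an element of $\mathrm{PGL}(2,q^h)$ so that $\langle(1,0)\rangle,\langle(0,1)\rangle,\langle(1,1)\rangle$ are points of $L$ of weight $2$.

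With this normalisation, put $V_1=\{x\in\F_{q^h}\mid (x,0)\in U\}$ and $V_2=\{y\in\F_{q^h}\mid (0,y)\in U\}$. Both are $2$-dimensional over $\F_q$ (the weights of $\langle(1,0)\rangle$ and $\langle(0,1)\rangle$), and by the spanning observation $U=V_1\times V_2$. The weight of $\langle(1,1)\rangle$ being $2$ means $\{x\in\F_{q^h}\mid (x,x)\in U\}=V_1\cap V_2$ is $2$-dimensional, which forces $V_1=V_2=:V$, so $U=V\times V$. Choosing $0\ne v\in V$ and replacing $U$ by $v^{-1}U$ (which does not change $L_U$) we may assume $V=\langle 1,\alpha\rangle_{\F_q}$ with $\alpha\in\F_{q^h}\setminus\F_q$. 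Then $L$ is projectively equivalent to $\{\langle(a+b\alpha,\,c+d\alpha)\rangle\mid (a,b,c,d)\in\F_q^4\setminus\{0\}\}$, which is precisely the linear set of Theorem \ref{thm1} for $k=4$ and $t_1=t_2=2$ (with $\mu_{1,2}=a,\ \mu_{1,1}=b,\ \mu_{2,2}=c,\ \mu_{2,1}=d$).

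It then remains to see that $\alpha$ satisfies the hypotheses of Theorem \ref{thm1}. Setting $s=[\F_q(\alpha):\F_q]$, one has $s\mid h$ automatically. If $s=2$ then $V=\F_{q^2}$, so $U=\F_{q^2}\times\F_{q^2}$ and $L_U=\PG(1,q^2)$ has only $q^2+1$ points, contradicting $|L|=q^3+1$; therefore $s\ge 3=k-1$, and Theorem \ref{thm1} applies and yields exactly this $L$.

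I expect the only step with real content to be the one showing that the weight-$2$ points force $U$ to split as $V\times V$ for a single $2$-dimensional $V$; the coordinate change putting three weight-$2$ points in standard position, the harmless rescaling of $U$, and the final check on $s$ are all routine. The one thing to be careful about is that $q+1\ge 3$ is genuinely needed to have three weight-$2$ points to normalise, and that only the first transformation is a projectivity (the rescaling merely replaces $U$ by another space defining the same $L$), so the equivalence obtained is the one meant in the statement.
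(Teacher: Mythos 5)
Your proof is correct, and it takes a genuinely different route from the paper. The paper works in the projection model: it realises $L$ as the projection of a subgeometry $\Sigma\cong\PG(3,q)$ from a line $\Pi$, coordinatises the lines $n_i$ carrying the sublines that project to the weight-$2$ points, applies an explicit collineation $\psi$, and along the way shows that the $q+1$ sublines form a regulus of $\Sigma$ before reading off the form of Theorem \ref{thm1}. You instead argue directly on the defining subspace $U\le\F_{q^h}^2$: the key observation that two weight-$2$ points give $2$-dimensional subspaces $U_Q$, $U_{Q'}$ with $U_Q\cap U_{Q'}=\{0\}$, hence $U=U_Q\oplus U_{Q'}$, combined with a third weight-$2$ point at $\langle(1,1)\rangle$ forcing $V_1=V_2$, immediately yields $U=V\times V$ with $\dim_{\F_q}V=2$; the rescaling by $v^{-1}$ and the exclusion of $[\F_q(\alpha):\F_q]=2$ via $|L|=q^3+1$ are exactly the right finishing touches (the paper rules out $s=2$ by the same size argument). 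Your version is shorter and more elementary, and it makes transparent that only three weight-$2$ points are actually used (which is no extra generality here, since the weight equations force $q+1$ of them once one exists). What the paper's heavier machinery buys is the extra geometric information about the regulus and, more importantly, a template that carries over to Proposition \ref{alles} for rank $5$, where the points of weight at least $2$ have unequal weights and the clean splitting $U=V\times V$ is no longer available. One small point of care, which you already flag correctly: the normalisation of the three weight-$2$ points is the only genuine projectivity applied, and the scalar rescaling $U\mapsto v^{-1}U$ does not change $L_U$ at all, so the equivalence obtained is indeed the $\mathrm{PGL}(2,q^h)$-equivalence intended in the statement.
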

\begin{proof}Let $L$ be a linear set of rank $4$ with size $q^3+1$ in $\PG(1,q^h)$, containing $q+1$ points of weight $2$ and $q^3-q$ of weight $1$. Then $L$ can be obtained as the projection of a subgeometry $\Sigma\cong \PG(3,q)$ from a line, say $\Pi$, disjoint from $\Sigma$, onto some line $\Omega$ disjoint from $\Pi$. Each of the $q+1$ points of weight $2$, say $P_1,\ldots,P_{q+1}$ in $L$, has the property that $\langle P_i,\Pi\rangle$ meets the subgeometry $\Sigma$ in a set of $q+1$ points, forming a subline. Let $n_i$ denote the line of $\PG(3,q^h)$ containing these points. Then $n_i$ intersects $\Pi$ in a point, say $Q_i$. Without loss of generality, $\Sigma$ can be taken to be the canonical subgeometry of $\PG(3,q^h)$, determined by the points that have a set of homogeneous coordinates with all entries in $\F_q$. Moreover, we can take $n_1$ to contain the points with coordinates $\langle(1,0,0,0)\rangle$ and $\langle(0,1,0,0)\rangle$ in $\Sigma$, and $n_2$ to contain the points with coordinates $\langle(0,0,1,0)\rangle$ and $\langle(0,0,0,1)\rangle$ in $\Sigma$.
This implies that $Q_1$ is of the form $\langle(1,\alpha,0,0)\rangle$ for some $\alpha\in \F_{q^h}\setminus \F_q$ since $M$ is disjoint from $\Sigma$. Likewise, $Q_2$ is of the form $\langle(0,0,1,\beta)\rangle$ for some $\beta\in \F_{q^h}\setminus \F_q$.
The line $n_3$ of $\PG(3,q^h)$ intersects $\Sigma$ in a subline which is disjoint from $n_1\cap \Sigma$ and $n_2\cap \Sigma$. Hence, $n_3$ intersects the subplane of $\Sigma$ spanned by $\langle (1,0,0,0)\rangle,\langle(0,0,1,0)\rangle,\langle(0,0,0,1)\rangle$ in exactly one point, which then has coordinates $\langle(1,0,a,b)\rangle$ for some $a,b\in \F_q$; similarly, it intersects the subplane of $\Sigma$ through $\langle (0,1,0,0)\rangle,\langle(0,0,1,0)\rangle,\langle(0,0,0,1)\rangle$ in exactly one point, which then has coordinates $\langle(0,1,c,d)\rangle$ for some $c,d\in \F_q$. Since $n_1,n_2,n_3$ are disjoint, it follows that $ad-bc\neq 0$.
The point $Q_3$ lies on the line of $\PG(3,q^h)$ through $\langle(1,0,a,b)\rangle$ and $\langle(0,1,c,d)\rangle$, and also on the line of $\PG(3,q^h)$ through $Q_1=\langle(1,\alpha,0,0)\rangle$ and  $Q_2=\langle(0,0,1,\beta)\rangle$. Expressing that $\langle(1,0,a,b)\rangle, \langle(0,1,c,d)\rangle,\langle(1,\alpha,0,0)\rangle$ and $\langle(0,0,1,\beta)\rangle$ are coplanar forces $\alpha=\frac{a\beta-b}{d-c\beta}$.

Now consider the collineation $\psi$ induced by the matrix $\begin{bmatrix}1&0&0&0\\0&1&0&0\\0&0&-d&c\\0&0&b&-a\end{bmatrix}$, acting on the homogeneous coordinates of the points of $\PG(3,q^h)$ seen as column vectors by left multiplication. We see that $\psi(\Sigma)=\Sigma$, $\psi(\langle (1,\alpha,0,0)\rangle)=\langle (1,\alpha,0,0)\rangle$ and  $\psi(\langle (0,0,1,\beta)\rangle)=\langle(0,0,-d+c\beta,b-a\beta)\rangle=\langle(0,0,1,\alpha)\rangle$. Furthermore, $\psi(n_1)=n_1$ and $\psi(n_2)=n_2$. 
Since $\psi(Q_i)$ lies on the line through $\psi(Q_1)$ and $\psi(Q_2)$, we find that $\psi(Q_i)$ has coordinates of the form $\langle(1,\alpha,\xi,\xi\alpha)\rangle$ for some $\xi\in \F_{q^h}$. But since $\psi(Q_i)$ also lies on a line $\psi(n_i)$, intersecting $\Sigma$ in a subline, it contains a point of $\Sigma$ with coordinates $\langle(1,0,a',b')\rangle$ and $\langle(0,1,c',d')\rangle$. As before, this implies that $\alpha=\frac{a'\alpha-b'}{d'-c'\alpha}$.

Suppose first that $[\F_q(\alpha):\F_q]=2$. Consider a point on the line $\langle (1,\alpha,0,0),(0,0,1,\alpha)\rangle$ of the form $\langle (1,\alpha,\xi,\xi\alpha)\rangle$ with $\xi\in \F_q(\alpha)$, then we can rewrite this point as $\langle(1,\alpha,c_1\alpha+c_2,d_1\alpha+d_2)\rangle$ for some $c_1,c_2,d_1,d_2\in \F_q$. We see this point lies on the line meeting $\Sigma$ in the subline of $\Sigma$ through the points $\langle(1,0,c_2,d_2)\rangle$ and $\langle (0,1,c_1,d_1)\rangle$. This implies that the projection of $\Sigma$ from the line through $\langle (1,\alpha,0,0)\rangle,\langle(0,0,1,\alpha)\rangle$ gives rise to a linear set of size $q^2+1$ all of whose points have weight $2$, a contradiction since we assumed $L$ has weight $q^3+1$. 

So we know that $[\F_q(\alpha):\F_q]\geq 3$. It then follows from $\alpha=\frac{a'\alpha-b'}{d'-c'\alpha}$ that $a'=d'$ and $b'=c'=0$, and hence, $\psi(Q_i)$ has coordinates $\langle (1,\alpha, \lambda,\lambda\alpha)\rangle$ for some $\lambda\in \F_q$. This also shows that the sublines $n_1\cap \Sigma,n_2\cap \Sigma,\ldots,n_{q+1}\cap \Sigma$ form a regulus of $\Sigma$: they are precisely the points with homogeneous coordinates lying on the hyperbolic quadric of $\Sigma$ defined by the equation $X_0X_3=X_1X_2$.

Recall that $\psi(\Pi)$ is spanned by $\langle (1,\alpha,0,0)\rangle$ and $\langle(0,0,1,\alpha)\rangle$. If we project $\Sigma$ from $\psi(\Pi)$ onto the line $\Omega$ spanned by $\langle(0,1,0,0)\rangle$ and $\langle(0,0,0,1)\rangle$, then we see that the point $\langle(-\mu_2,\mu_1,-\mu_4,\mu_3)\rangle$ in $\Sigma$ is projected onto the point with coordinates $\langle(0,\mu_1+\mu_2\alpha,0,\mu_3+\mu_4\alpha)\rangle$. We now see that $L$ is given by the set  $\{\langle (\mu_1+\mu_2\alpha,\mu_3+\mu_4\alpha)\rangle \mid \mu_i \in \F_q,(\mu_1,\mu_2,\mu_3,\mu_4)\neq (0,0,0,0)\}$ which is of the form constructed in Theorem \ref{thm1} with $t_1=t_2=2$.
\end{proof}

\begin{remark} For linear sets of rank $4$ in $\PG(1,q^4)$, the previous proposition also follows from the classification of $\F_q$-linear blocking sets in \cite{bonoli}. \end{remark}

\subsubsection{$k=5$}

It follows from Equations \eqref{trivial2} and \eqref{trivial} that if $L$ is a linear set of rank $5$ of size $q^4+1$, then either
\begin{enumerate}
\item[(a)] $L$ contains one point of weight $4$ and the other $q^4$ points have weight one (i.e. $L$ is a club).
\item[(b)] $L$ contains one point of weight $3$, $q^2$ points of weight two and the other $q^4-q^2$ points have weight one.
\item[(c)] $L$ contains $q^2+q+1$ points of weight $2$ and the other $q^4-q^2-q$ points have weight one.
\end{enumerate}
For a linear set of type (b) in $\PG(1,q^h)$ we will make a distinction into the following two cases:
\begin{itemize}
\item[(b1)] the set of $q^2+1$ points of weight at least $2$ forms an $\F_{q^2}$-subline
\item[(b2)] the set of $q^2+1$ points of weight at least $2$ does not form an $\F_{q^2}$-subline.
\end{itemize}
Obviously, for case (b1) to occur, $h$ needs to be even.
Now consider a linear set obtained from our construction for $k=5$ with heaviest point of weight $3$: 
\[L=\{\langle(\mu_1+\mu_2\alpha,\mu_3+\mu_4\alpha+\mu_5\alpha^2)\rangle \mid \mu_i\in \F_q, (\mu_1,\mu_2,\mu_3,\mu_4,\mu_5)\neq (0,0,0,0,0) \},\] where $[\F_q(\alpha):\F_q]\geq 4$. It is clear that $\langle(1,0)\rangle$ has weight $3$ and $\langle (0,1)\rangle,(1,1)\rangle, \langle (1,\alpha)\rangle$ have weight $2$. In order for these four points to lie on an $\F_{q^2}$-subline, $\alpha$ needs to be in $\F_{q^2}$, a contradiction since we asked that $[\F_q(\alpha):\F_q]\geq 4$. This implies that a linear set of type (b1) can never be obtained from our construction.
On the other hand, we will show in Proposition \ref{alles} that linear sets of rank $5$ of size $q^4+1$ of type (b2) always arise from our construction.
\begin{remark} It is worth noting that it is possible to describe the full projective line $\PG(1,q^4)$ as a linear set of rank $5$ with exactly $q^2+q+1$ points of weight $2$ and all others of weight one (i.e. of type (c)). Equivalently one can construct a subspace of dimension $4$ intersecting a Desarguesian $3$-spread in $\PG(7,q^4)$ in exactly $q^2+q+1$ lines and $q^4-q^2-q$ points. This latter point of view also makes it easy to see that every linear set of rank $5$ in $\PG(1,q^4)$ necessarily is the set of all $q^4+1$ points of $\PG(1,q^4)$. Now consider  $L=\{\langle(\lambda_1+\lambda_2\alpha+\lambda_3 \alpha^2,\lambda_4+\lambda_1\alpha+\lambda_5\alpha^2)\rangle\mid \lambda_i\in \F_q, (\lambda_1,\lambda_2,\lambda_3,\lambda_4,\lambda_5)\neq (0,0,0,0,0)\}$, then $L$ is a linear set of rank $5$ and it is easy to check that $L$ does not contain points of weight higher than two. It follows that there are precisely $q^2+q+1$ points of weight $2$ (of which $\langle(1,0)\rangle,\langle(0,1)\rangle$ and $\langle (1,1)\rangle$ are three).

\end{remark}

\begin{proposition}\label{alles} Let $L$ be a linear set of rank $5$ with size $q^4+1$ in $\PG(1,q^h)$, containing one point of weight $3$ and $q^2$ points of weight $2$ in $\PG(1,q^h)$. If the points of weight at least $2$ do not form an $\F_{q^2}$-subline, then $L$ can be obtained from the construction of Theorem \ref{thm1}.
\end{proposition}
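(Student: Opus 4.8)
The strategy mirrors the proof of Proposition \ref{prop1}, but with an extra layer of bookkeeping coming from the unique point of weight $3$. Write $L$ as the projection of the canonical subgeometry $\Sigma\cong\PG(4,q)$ of $\PG(4,q^h)$ from a plane $\Pi$ (disjoint from $\Sigma$) onto a line $\Omega$. The weight-$3$ point $P_0$ pulls back to a plane of $\Sigma$: more precisely, $\langle P_0,\Pi\rangle\cap\Sigma$ is a subplane $\PG(2,q)$, and there is a plane $\rho_0$ of $\PG(4,q^h)$ with $\rho_0\cap\Sigma$ equal to this subplane; $\rho_0$ meets $\Pi$ in a line $m_0$. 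Each weight-$2$ point $P_i$, $i=1,\dots,q^2$, pulls back to a subline $n_i\cap\Sigma$, with $n_i$ a line of $\PG(4,q^h)$ meeting $\Pi$ in a point $Q_i$.

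First I would normalise coordinates. Since $\rho_0\cap\Sigma$ is a $q$-subplane, we may take it to be spanned by $\langle\mathbf{e_1}\rangle,\langle\mathbf{e_2}\rangle,\langle\mathbf{e_3}\rangle$; then $m_0=\rho_0\cap\Pi$ is a line of $\PG(4,q^h)$ disjoint from $\Sigma$ inside this plane, so after a coordinate change fixing $\Sigma$ it can be taken to be $\langle\langle\mathbf{e_1}-\alpha\mathbf{e_2}\rangle,\langle\mathbf{e_1}-\alpha^2\mathbf{e_3}\rangle\rangle$ for some $\alpha\in\F_{q^h}\setminus\F_q$ with $[\F_q(\alpha):\F_q]\geq 2$ (the disjointness from $\Sigma$ forces $\alpha\notin\F_q$, and the fact that $P_0$ has weight exactly $3$, not more, should force $[\F_q(\alpha):\F_q]\geq 3$, arguing as in the $k=4$ case that if $\alpha\in\F_{q^2}$ one gets a larger linear set). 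Next, pick one weight-$2$ point, say $P_1$: its subline $n_1\cap\Sigma$ is disjoint from $\rho_0\cap\Sigma$, so it meets the complementary plane $\langle\mathbf{e_4},\mathbf{e_5},\dots\rangle$ — wait, in rank $5$ the complement of a plane is a line — it meets the line $\langle\mathbf{e_4},\mathbf{e_5}\rangle$ of $\Sigma$ in (at most) a point; normalise so that $n_1\cap\Sigma$ contains $\langle\mathbf{e_4}\rangle$ and $\langle\mathbf{e_5}\rangle$, whence $Q_1$ has the form $\langle\mathbf{e_4}-\beta\mathbf{e_5}\rangle$, $\beta\in\F_{q^h}\setminus\F_q$. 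Since $\Pi=\langle m_0,Q_1\rangle$ is $3$-dimensional, this already pins $\Pi$ down.

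The heart of the argument is then to show that $\beta$ can be taken to lie in $\F_q(\alpha)$, in fact that after a further collineation fixing $\Sigma$, $m_0$ and $\Omega$, one has $\beta=\alpha$, so that $\Pi=\langle\langle\mathbf{e_1}-\alpha\mathbf{e_2}\rangle,\langle\mathbf{e_1}-\alpha^2\mathbf{e_3}\rangle,\langle\mathbf{e_4}-\alpha\mathbf{e_5}\rangle\rangle$, which is exactly the subspace $\Pi$ of Construction \ref{cons1} with partition sizes $t_1=3$, $t_2=2$. To do this I would use the remaining $q^2-1$ weight-$2$ points. Each such $P_i$ gives a point $Q_i\in\Pi$ lying on a line $n_i$ meeting $\Sigma$ in a subline; expressing that $n_i\cap\Sigma$ is a $q$-subline disjoint from the others, and that $Q_i\in\langle m_0,Q_1\rangle$, yields algebraic relations among the coordinates of $Q_i$, $\alpha$ and $\beta$ over $\F_q$. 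Exactly as in Proposition \ref{prop1}, the condition that a point of $\PG(4,q^h)$ lies both in $\Pi$ and on a line through two $\F_q$-rational points translates (via the $\F_q(\alpha)$-representation and Lemma \ref{lemnew}, using $t_1+t_2=5\leq s+1$) into polynomial identities over $\F_q$; the hypothesis that there are $q^2$ such points — the maximum possible — forces these identities to have the full $q^2$-dimensional family of solutions, which is only possible when $\beta\in\F_q(\alpha)$ and, after scaling, $\beta=\alpha$. The excluded case $\beta\in\F_q$, or $\beta$ generating a different subfield, would produce either too few weight-$2$ points or would make the weight-$\geq 2$ locus an $\F_{q^2}$-subline — and it is precisely here that the hypothesis "the points of weight at least $2$ do not form an $\F_{q^2}$-subline" is used to rule out the exceptional configuration, just as the parallel hypothesis $[\F_q(\alpha):\F_q]\geq 3$ was used in the $k=4$ proof.

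The main obstacle I anticipate is the combinatorial-geometric step of showing that the $q^2$ weight-$2$ points genuinely force $\beta\in\F_q(\alpha)$ and the regulus-like structure on the sublines $n_i\cap\Sigma$; in the $k=4$ case there were only $q+1$ weight-$2$ points and the quadric $X_0X_3=X_1X_2$ appeared almost for free, whereas here one must identify the correct $3$-dimensional analogue (a Segre-like variety, or the set of sublines through $m_0$ meeting a fixed transversal line of $\Sigma$) and check it accounts for exactly $q^2$ points with no degeneration. Once $\Pi$ is in the normal form above, the conclusion is immediate: projecting $\Sigma$ from $\Pi$ onto $\Omega=\langle\langle\mathbf{e_3}\rangle,\langle\mathbf{e_5}\rangle\rangle$ sends $\langle(\mu_{1,3}+\mu_{1,2}\alpha+\mu_{1,1}\alpha^2,\ \mu_{2,2}+\mu_{2,1}\alpha)\rangle$ — up to relabelling — to the generic point of $L$, matching Theorem \ref{thm1} with $t_1=3,t_2=2$, and we are done.
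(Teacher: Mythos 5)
Your proposal is a plan rather than a proof: the two steps you yourself identify as ``the heart of the argument'' and ``the main obstacle'' are exactly the steps the paper has to work for, and they are not supplied. Most concretely, your very first normalisation is unjustified. You claim that the line $m_0=\rho_0\cap\Pi$, being disjoint from the subplane $\rho_0\cap\Sigma=\langle \mathbf{e_1},\mathbf{e_2},\mathbf{e_3}\rangle\cap\Sigma$, can be brought by a collineation fixing $\Sigma$ into the form $\langle\langle \mathbf{e_1}-\alpha \mathbf{e_2}\rangle,\langle \mathbf{e_1}-\alpha^2 \mathbf{e_3}\rangle\rangle$. Dually, a line of $\langle \mathbf{e_1},\mathbf{e_2},\mathbf{e_3}\rangle_{q^h}$ disjoint from the subplane corresponds to a triple $(a_0,a_1,a_2)$ of $\F_q$-independent elements of $\F_{q^h}$ modulo an $\F_{q^h}$-scalar and the $\mathrm{PGL}(3,q)$-action, i.e.\ to a $3$-dimensional $\F_q$-subspace of $\F_{q^h}$ up to scalar; your standard line corresponds to the subspaces $\lambda\langle 1,\alpha,\alpha^2\rangle$, and not every $3$-dimensional $\F_q$-subspace has this form. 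The weight-$3$ point alone imposes no constraint here (any line disjoint from the subplane projects it to a single point of weight $3$), so the special shape of $m_0$ must be \emph{deduced} from the existence and configuration of the $q^2$ weight-$2$ sublines -- which is precisely the content you leave as ``algebraic relations \dots\ which force the identities to have the full $q^2$-dimensional family of solutions''. No mechanism for that deduction is given.

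For comparison, the paper's proof never normalises the weight-$3$ plane first. It picks two weight-$2$ sublines $\ell_1,\ell_2$, notes that $\langle\ell_1,\ell_2\rangle$ meets the weight-$3$ subplane $\mu$ in a subline $m_1$, and uses the hypothesis (via the characterisation of point sets closed under taking $\F_q$-sublines, \cite[Theorem 1.5]{rottey}) at exactly one spot: to produce a third subline $\ell_3\not\subset\langle\ell_1,\ell_2\rangle$ and hence a second subline $m_2$ of $\mu$. Choosing a frame adapted to $\ell_1$, $m_1$, $m_2$, the plane $\Pi$ is spanned by three explicit points $X,Y,Z$ on the extensions of these three lines, and the decisive computations are elementary coplanarity conditions: the solid extending $\langle\ell_1,\ell_2\rangle$ meets $\Pi$ in the line $XY$ only (else $P_1$ would have weight $4$), forcing $\beta=\alpha$, and the analogous condition for $\ell_3$ forces $\gamma=\frac{a\alpha+b}{c\alpha+d}$ with $d\ne 0$; the $\alpha^2$ you want to build in at the start then appears only at the end, after projecting. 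If you want to salvage your outline, you would need to replace the asserted normalisation of $m_0$ by an argument of this kind, and make explicit both where the non-$\F_{q^2}$-subline hypothesis enters and how the coplanarity constraints pin down $\Pi$.
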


\begin{proof} Let $L$ be a linear set of rank $5$ in $\PG(1,q^h)$ with size $q^4+1$, containing one point of weight $3$ and $q^2$ points of weight $2$. Then $L$ can be obtained as the projection of a subgeometry $\Sigma\cong \PG(4,q)$ from a plane $\Pi$ onto a line $\Omega$ disjoint from $\Pi$ in $\PG(4,q^h)$. Moreover, there is a subplane $\mu$ of $\Sigma$ such that $\langle \Pi,P\rangle\cap \Sigma=\mu$ for a point $P$ on $\Omega$, and $q^2$ sublines, say $\ell_1,\ldots,\ell_{q^2}$ in $\Sigma$ and points $P_1,\ldots,P_{q^2}$ on $\Omega$ such that $\langle \Pi,P_i\rangle \cap \Sigma=\ell_i$. The sublines $\ell_1,\ldots,\ell_{q^2}$ of $\Sigma$ are disjoint, so any two of them span a $3$-space contained in $\Sigma$, which necessarily meets the subplane $\mu$ in a subline of $\Sigma$. Let $m_1$ be the subline of $\Sigma$ obtained as $\langle \ell_1,\ell_2\rangle \cap \mu$.

Now suppose that all sublines $\ell_i$ are contained in $\langle \ell_1,\ell_2\rangle$. Then the set of points $\{P_1,\ldots,P_{q^2},P\}$ is closed under taking $\F_q$-sublines, so it forms an $\F_{q^2}$-subline (see e.g. \cite[Theorem 1.5]{rottey}), a contradiction.

So we can consider one of the sublines $\ell_i$, say $\ell_3$, that is not contained in $\langle \ell_1,\ell_2\rangle$, and consider the subline $m_2=\langle \ell_1,\ell_3\rangle\cap \mu$ of $\Sigma$. Let $R$ be the point $m_1\cap m_2$ which is contained in $\mu$. Let $n_1$ be the unique line through $R$ meeting $\ell_1$ and $\ell_2$ and let $S_1=\ell_1\cap n_1$. Let $T\neq R$ be a point on $m_1$, let $n_2$ be the unique line through $T$ meeting $\ell_1$ and $\ell_2$, and let $S_2=\ell_1\cap n_2$. Let $U$ be a point on $\ell_2$, different from $n_1\cap \ell_2$ and $n_2\cap \ell_2$ and let $V\neq R$ be a point on $m_2$ .
Without loss of generality, $\Sigma$ can be taken to be the canonical subgeometry of $\PG(3,q^h)$, determined by the points that have a set of homogeneous coordinates with all entries in $\F_q$. Moreover, since $\mathrm{PGL}(5,q)$ acts transitively on the frames of $\Sigma$, we can pick coordinates such that $S_1=\langle(1,0,0,0,0)\rangle$, $S_2=\langle(0,1,0,0,0)\rangle$, $R=\langle(0,0,1,0,0)\rangle$, $T=\langle(0,0,0,1,0)\rangle$, $U=\langle (1,1,1,1,0)\rangle$ and $V=\langle(0,0,0,0,1)\rangle$. It follows that $n_1$ meets $\ell_2$ in $\langle(1,0,1,0,0)\rangle$ and $n_2$ meets $\ell_2$ in $\langle(0,1,0,1,0)\rangle$.

The line of $\PG(4,q^h)$ containing the subline $\ell_1$ intersects $\Pi$ in a point $X$ with coordinates $\langle (1,\alpha,0,0,0)\rangle$ for some $\alpha\in \F_{q^h}\setminus \F_q$, the line containing the subline $m_1$ intersects $\Pi$ in a point $Y$ with coordinates $\langle (0,0,1,\beta,0)\rangle$ for some $\beta\in \F_{q^h}\setminus \F_q$ and the line containing $m_2$ intersects $\Pi$ in a point $Z$ with coordinates $\langle (0,0,\gamma,0,1)\rangle$ for some $\gamma\in \F_{q^h}\setminus \F_q$.

Note that the $3$-dimensional subgeometry defined by $\ell_1$ and $\ell_2$ extends to a $3$-dimensional subspace of $\PG(4,q^h)$ intersecting $\Pi$ in the line $XY$: if this space would contain $\Pi$, $P_1$ would be a point of weight $4$, a contradiction. This implies that the line containing the subline $\ell_2$ meets $\Pi$ in a point on the line $XY$. In other words, the points $X=\langle (1,\alpha,0,0,0)\rangle,Y=\langle (0,0,1,\beta,0)\rangle,\langle(1,0,1,0,0)\rangle$ and $\langle(0,1,0,1,0)\rangle$ are co-planar which forces $\beta=\alpha$. As in the proof of Proposition \ref{prop1}, expressing that the line containing $\ell_j$ meets $\Pi$ in a point on $XZ$ forces $\gamma$ to be of the form $\frac{a\alpha+b}{c\alpha+d}$ for some $a,b,c,d\in \F_q$ with $ad-bc\neq 0$ (see also \cite[Theorem 2.4]{maartenenik} for a general argument). Hence $Z=\langle (0,0,a\alpha+b,0,c\alpha+d)\rangle$. Note that if $d=0$, then the line $YZ$ intersects the subgeometry $\Sigma$ in the point $\langle(0,0,a\alpha,-b\alpha,c\alpha)\rangle=\langle(0,0,a,-b,c)\rangle$, a contradiction since $\Pi$ is disjoint from $\Sigma$. Hence, $d\neq 0$. Now consider the projection of $\Sigma$ from $\Pi$ onto the line through the points $\langle (0,1,0,0,0)\rangle$ and $\langle (0,0,1,0,0)\rangle$: the point $\langle(\lambda_1,\lambda_2,\lambda_3,\lambda_4,\lambda_5)\rangle$ of $\Sigma$ is projected onto the point $\langle(0,\lambda_2-\lambda_1\alpha,\lambda_3+\frac{\lambda_4}{\alpha}-\frac{\lambda_5(a\alpha+b)}{c\alpha+d},0,0)\rangle$. Hence, we find that our linear set is equivalent to the set of points $\{\langle (\lambda_2-\lambda_1\alpha,(\lambda_3c-a\lambda_5) \alpha^2+(-\lambda_4c-b\lambda_5+d\lambda_3)\alpha-\lambda_4d)\rangle \mid \mu_i \in \F_q,(\lambda_1,\lambda_2,\lambda_3,\lambda_4\lambda_5)\neq (0,0,0,0,0)\}$ in $\PG(1,q^h)$. Since $d\neq 0$ and $ad-bc\neq 0$, this set equals $\{\langle (\mu_1+\mu_2\alpha,\mu_3+\mu_4\alpha+\mu_5\alpha^2)\rangle \mid \lambda_i \in \F_q,(\mu_1,\mu_2,\mu_3,\mu_4,\mu_5)\neq (0,0,0,0,0)\}$ which was constructed in Theorem \ref{thm1}.

\end{proof}

\begin{remark} For linear sets of rank $5$ in $\PG(1,q^5)$ we can say more. In \cite{maartenenik} the authors determine the possible weight distributions of  linear sets in $\PG(1,q^5)$; in particular, it follows that possibility (c) does not occur for linear sets of rank $5$ in $\PG(1,q^5)$. Moreover, we have seen in Subsection \ref{trace} that all linear sets in $\PG(1,q^5)$ of type (a) can be obtained from our construction. Since $5$ is odd, there are no linear sets of type (b1). Combined with Proposition \ref{alles} this shows that all $\F_q$-linear sets of size $q^4+1$ in $\PG(1,q^5)$ arise from our construction.
\end{remark}

\begin{remark} Consider two linear sets, say $L_1$ and $L_2$, of rank $k$ in $\PG(1,q^h)$ obtained as the projection of a subgeometry $\Sigma\cong \PG(k-1,q)$ from a subspace $\Pi_1$ and $\Pi_2$ resp. If there is an element of $\PGammaL(k,q^h)$ such that $\phi(\Sigma)=\Sigma$ and $\phi(\Pi_1)=\Pi_2$, then $L_1$ and $L_2$ are $\PGammaL(2,q^h)$-equivalent. We have used this idea in Proposition \ref{prop1} to show that all rank $4$ linear sets of size $q^3+1$ with $(q+1)$ points of weight $2$ can be obtained as in Theorem \ref{thm1}. But, as was pointed out in \cite{corrado}, this condition is not necessary: it is possible for two linear sets $L_1$ and $L_2$ on a line to be equivalent, even though there is no element of $\PGammaL(k,q^h)$ stabilising $\Sigma$ and mapping $\Pi_1$ onto $\Pi_2$.
This is one of the reasons why the equivalence problem for linear sets is difficult, even just for the case of clubs of rank $k<h$ (see Open Problem (C)).

\end{remark}

\subsection{Linear sets of small size in projective spaces of dimension at least two.}
Note that in Construction \ref{cons1} we partitioned the set $\{\textbf{e}_\textbf{1},\dots, \textbf{e}_\textbf{k}\}$ in two parts which in turn give us two points that form the line $\Omega$ onto which we are projecting. We can generalise the same idea to higher dimensional spaces by increasing the number of partitions we make. To preserve the polynomial behaviour as described in Lemma \ref{lemnew}, in this case, we will have to restrict the size of the partitions in such a way that sum of the sizes of any two of them is bounded above by $s+1$. This construction embeds the linear set on a line constructed in the previous section in a higher dimensional space.  We refer to Remark \ref{rem} for a more detailed description of the geometric construction.

\begin{lemma}\label{lem4} Let $L$ be the set of points with coordinates $$\langle (f_1(\alpha),f_2(\alpha),\dots,f_{l+1}(\alpha))\rangle $$ in $\PG(l,q^h)$
where $\alpha$ is an element of $\F_{q^h}\setminus \F_q$ such that $[\F_q(\alpha):\F_q]=s$, $f_i$ is a polynomial in degree at most $t_i-1$ over $\mathbb{F}_q$ such that for all $i\neq j$, $t_i+t_j\leq s+1$, and $1\le t_1, t_2, \ldots, t_{l+1}\le h$.

There is a bijection between the set of points of $L$ and  the set $$S=\{(f_1,f_2,\dots,f_{l+1})|\deg{(f_i)}\leq t_i-1,1\le t_1\le t_2\dots\le t_{l+1},$$$$(f_1,f_2,\dots,f_{l+1})\text{ is in reduced form}\}$$\end{lemma}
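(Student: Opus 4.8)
The plan is to reduce the case of $l+1$ polynomials to the already-established two-polynomial case by viewing the projection geometrically and then applying Lemma~\ref{lemnew} pairwise. First I would mimic Construction~\ref{cons1}: partition $\{\textbf{e}_\textbf{1},\dots,\textbf{e}_\textbf{k}\}$ (with $k=t_1+\dots+t_{l+1}$) into $l+1$ ordered parts $P_1,\dots,P_{l+1}$ of sizes $t_1,\dots,t_{l+1}$, form for each $A_i$ the subspace $\pi_i$ spanned by the points $\langle\textbf{e}_{\textbf{i}\textbf{,}\textbf{1}}-\alpha^j\textbf{e}_{\textbf{i}\textbf{,}\textbf{j}\textbf{+}\textbf{1}}\rangle$ (empty if $t_i=1$), set $\Pi=\langle\pi_1,\dots,\pi_{l+1}\rangle$ and $\Omega=\langle\langle\textbf{e}_{\textbf{1}\textbf{,}\textbf{t}_\textbf{1}}\rangle,\dots,\langle\textbf{e}_{\textbf{l}\textbf{+}\textbf{1}\textbf{,}\textbf{t}_{\textbf{l}\textbf{+}\textbf{1}}}\rangle\rangle$. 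The condition $t_i+t_j\le s+1$ guarantees in particular each $t_i-1<s$, so the disjointness of $\Pi$ from $\Sigma=\PG(k-1,q)$ and from $\Omega$ follows exactly as in Lemma~\ref{lem1} (a nonzero $\F_q$-point of $\Pi$ would force a polynomial relation of degree $<s$ in $\alpha$). Hence the projection $p_{\Pi,\Omega}$ is a rank-$k$ linear set of $\Omega=\PG(l,q^h)$, and the same coordinate computation as in Lemma~\ref{lem2} shows a point $\langle(\mu_{1,1},\dots,\mu_{l+1,t_{l+1}})\rangle$ of $\Sigma$ maps to $\langle(f_1(\alpha),\dots,f_{l+1}(\alpha))\rangle$ with $f_i(\alpha)=\mu_{i,t_i}+\alpha\mu_{i,t_i-1}+\dots+\alpha^{t_i-1}\mu_{i,1}$, so the point set of $L$ is exactly as described.

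Next I would establish the bijection with $S$. After reordering coordinates so that $t_1\le t_2\le\dots\le t_{l+1}$, every point of $L$ admits a representation $\langle(f_1(\alpha),\dots,f_{l+1}(\alpha))\rangle$ with $\deg f_i\le t_i-1$, and dividing the tuple by $\gcd(f_1,\dots,f_{l+1})$ puts it in reduced form, so every point determines an element of $S$. For injectivity, suppose $(f_1,\dots,f_{l+1})$ and $(g_1,\dots,g_{l+1})$ in reduced form determine the same projective point, i.e.\ $f_i(\alpha)g_j(\alpha)=f_j(\alpha)g_i(\alpha)$ for all $i,j$. Fix an index $r$ with $g_r\neq 0$ (such $r$ exists since the tuple is in reduced form; if also $f_r\ne 0$ use that pair, otherwise argue as below); for each $i$, the pair of relations coming from the two indices $\{i,r\}$ is of the shape handled by Lemma~\ref{lemnew}, since $\deg f_i,\deg g_i\le t_i-1$, $\deg f_r,\deg g_r\le t_r-1$ and $t_i+t_r\le s+1$. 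The remaining hypothesis of Lemma~\ref{lemnew} is coprimality of the two polynomials in each coordinate slot, which does \emph{not} come for free from the tuple being reduced — this is the main obstacle (see below). Once it is in place, Lemma~\ref{lemnew} applied to each pair $\{i,r\}$ gives $f_i=g_i$ for all $i$, proving injectivity and hence the bijection.

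The main obstacle, as flagged, is that $\gcd(f_1,\dots,f_{l+1})=1$ for a whole $(l+1)$-tuple does not imply $\gcd(f_i,f_r)=1$ for a particular pair, so Lemma~\ref{lemnew} cannot be invoked verbatim. I would handle this by first reducing each pair: write $d_{i}=\gcd(f_i,f_r)$ and $\tilde f_i=f_i/d_i$, and similarly on the $g$-side, and observe that the proportionality relations $f_i(\alpha)g_r(\alpha)=f_r(\alpha)g_i(\alpha)$ descend to relations between the reduced pairs after matching up the common factors — concretely, one shows the monic $\gcd$'s on both sides must agree by a degree count using $t_i+t_r\le s+1$, exactly as in the proof of Lemma~\ref{lemnew} where the key point was $\deg(z)=s$ while $\deg(f_1g_2-f_2g_1)\le s-1$. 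An alternative, cleaner route avoiding this entirely is to argue purely geometrically: $L$ is the projection of $\Sigma$ from $\Pi$, and a point $Q$ of $\Omega$ has weight one more than $\dim(\langle Q,\Pi\rangle\cap\Sigma)$; one checks that for each point of $\Omega$ in reduced form the preimage is a single point of $\Sigma$ (weight one) because $\Pi$ meets $\Sigma$ in the empty set and the relevant linear system over $\F_q$ has full rank — this gives the bijection directly without polynomial bookkeeping. Either way the routine verifications (disjointness, the coordinate formula for the projection, the degree counts) are straightforward; only the coprimality reduction needs genuine care.
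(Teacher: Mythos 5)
Your proposal follows the paper's strategy: surjectivity by dividing out the gcd of the tuple, injectivity by pairing each coordinate with the first nonzero one (your index $r$) and feeding the relation $f_ig_r=f_rg_i$ into Lemma~\ref{lemnew}. You are also right to flag the coprimality issue: reduced form only gives $\gcd(f_1,\dots,f_{l+1})=1$, not $\gcd(f_i,f_r)=1$, so Lemma~\ref{lemnew} does not apply verbatim. (The paper's own proof asserts $\gcd(f_{i_0},f_j)=1$ at exactly this point without justification, so you have identified a genuine subtlety in the written argument, not merely in your own.)

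However, neither of your two suggested repairs is actually carried out, and the mechanism you name for the first one is not the right tool. The degree count $t_i+t_r\le s+1$ only serves to upgrade $f_i(\alpha)g_r(\alpha)=f_r(\alpha)g_i(\alpha)$ in $\F_q(\alpha)$ to a polynomial identity $f_ig_r=f_rg_i$; it cannot show that $\gcd(f_i,f_r)$ and $\gcd(g_i,g_r)$ coincide, and pairwise these gcd's have no reason to match until $f_j=g_j$ is already known. What closes the gap is the coprimality of the \emph{whole} tuple, used globally: from $f_ig_r=f_rg_i$ for all $i$ (with $f_r\neq 0$, hence $g_r\neq 0$) one gets $qg_i=pf_i$ for all $i$, where $p/q$ is $g_r/f_r$ written in lowest terms with $q$ monic. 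Then $q\mid pf_i$ and $\gcd(p,q)=1$ force $q\mid f_i$ for every $i$, so $q\mid\gcd(f_1,\dots,f_{l+1})=1$; symmetrically $p\mid\gcd(g_1,\dots,g_{l+1})$ is constant, and monicity of $f_r,g_r$ gives $p/q=1$, i.e. $f_i=g_i$ for all $i$. This short divisibility argument replaces both your ``matching up common factors'' step and the appeal to Lemma~\ref{lemnew}. Your alternative geometric route (``the relevant linear system has full rank'') is likewise only asserted; as written it proves nothing about points of weight greater than one, which is precisely where injectivity of the map to reduced forms is at stake.
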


\begin{proof}Without loss of generality, we can rearrange the coordinates for $L$ such that $t_1\leq t_2\leq \ldots\leq t_{l+1}$.We proceed by induction on $l$ to compute the size of $S$. 
As in Lemma \ref{lem2}, we see that every point $P$ of the form $\langle f_1(\alpha)\textbf{e}_{\textbf{1}\textbf{,}{\textbf{t}_\textbf{1}}}+\dots+f_{l+1}(\alpha)\textbf{e}_{{\textbf{l}\textbf{+}\textbf{1}}\textbf{,}{\textbf{t}_{\textbf{l}\textbf{+}\textbf{1}}}}\rangle$ can be uniquely represented in its reduced form.
If  two reduced forms $(f_1,\dots,f_{l+1})$ and $(f_1',\dots,f_{l+1}')$ represent the same point in the constructed linear set, then we have that $f_if_j'=f_jf_i'$ for all $i\neq j$. Let $i_0$ be the value such that $f_{i_0}\neq 0$ but $f_i=0$ for all $i<i_0$. It is clear, since  $(f_1,\dots,f_{l+1})$ and $(f_1',\dots,f_{l+1}')$ represent the same point, that $f'_{i_0}\neq 0$ and $f'_i=0$ for all $i<i_0$. Since  $(f_1,\dots,f_{l+1})$ and $(f_1',\dots,f_{l+1}')$ are in reduced form, $f_{i_0}$ and $f'_{i_0}$ are monic.
From $f_{i_0}f'_j=f'_{i_0}f_j$, and $\gcd(f_{i_0},f_j)=\gcd(f'_{i_0},f'_j)=1$ it follows by Lemma \ref{lemnew} that $f_j=f'_j$, for all $j$.
\end{proof}

\begin{lemma}\label{newlem2}
The number of elements in the set $$S=\{(f_1,f_2,\dots,f_{l+1})|\deg{(f_i)}\leq t_i-1,1\le t_1\le t_2\dots\le t_{l+1},$$$$(f_1,f_2,\dots,f_{l+1})\text{ is in reduced form}\}$$ is $q^{k-1}+q^{k-2}+\dots+q^{k-l}+1$, where $k=\sum_{i=1}^{l+1}t_i$.
\end{lemma}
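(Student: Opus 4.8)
The plan is to pass from counting tuples in reduced form to counting $\F_q$-coprime tuples, and then to evaluate that count by inclusion--exclusion over the common monic divisors of the entries. For the first step, scalar multiplication by $\F_q^*$ acts freely on the set of $(l+1)$-tuples $(f_1,\dots,f_{l+1})$ over $\F_q$ with $\deg f_i\le t_i-1$ and $\gcd(f_1,\dots,f_{l+1})=1$, and each orbit contains exactly one tuple in reduced form, obtained by dividing through by the leading coefficient of the first nonzero entry (which changes neither the gcd nor the degrees). Hence $|S|=\tfrac{1}{q-1}\,D$, where $D$ is the number of coprime tuples with those degree bounds; note that, just as in Lemma~\ref{newlem1}, the hypothesis $t_1\le\dots\le t_{l+1}$ in the definition of $S$ only serves to match the bijection of Lemma~\ref{lem4} and is irrelevant to the count. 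Since
\[
\frac{q^{k}-q^{k-l}+q-1}{q-1}=q^{k-l}\cdot\frac{q^{l}-1}{q-1}+1=q^{k-1}+q^{k-2}+\dots+q^{k-l}+1,
\]
it suffices to prove that $D=q^{k}-q^{k-l}+q-1$.

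For the second step, fix a monic $g\in\F_q[X]$ of degree $\delta$. A polynomial $f$ with $\deg f\le t_i-1$ is divisible by $g$ in exactly $q^{\max(t_i-\delta,0)}$ ways, so the number of $(l+1)$-tuples all of whose entries are divisible by $g$ is $N_g=q^{\sum_i\max(t_i-\delta,0)}$; this depends on $g$ only through $\delta$, and $N_g=1$ as soon as $\delta\ge\max_i t_i$, so $N_g-1$ (the number of such nonzero tuples) vanishes there. For each nonzero tuple $(f_1,\dots,f_{l+1})$ the sum $\sum_{g\mid \gcd(f_1,\dots,f_{l+1})}\mu(g)$ equals $1$ if the gcd is $1$ and $0$ otherwise, where $\mu$ is the Möbius function of $\F_q[X]$; summing this over all nonzero tuples and interchanging the two (finite) sums gives
\[
D=\sum_{g\ \mathrm{monic}}\mu(g)\,(N_g-1)=\sum_{\delta=0}^{\max_i t_i-1}\Big(\sum_{\deg g=\delta}\mu(g)\Big)(N_g-1).
\]
The classical identity $\sum_{g\ \mathrm{monic}}\mu(g)\,x^{\deg g}=\big(\sum_{g\ \mathrm{monic}}x^{\deg g}\big)^{-1}=1-qx$ yields $\sum_{\deg g=0}\mu(g)=1$, $\sum_{\deg g=1}\mu(g)=-q$, and $\sum_{\deg g=\delta}\mu(g)=0$ for $\delta\ge2$, so only $\delta\in\{0,1\}$ survive. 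As all $t_i\ge1$, for $\delta=0$ we have $N_g-1=q^{k}-1$, and for $\delta=1$ we have $\sum_i\max(t_i-1,0)=\sum_i(t_i-1)=k-(l+1)$, hence $N_g-1=q^{k-l-1}-1$; therefore $D=(q^{k}-1)-q\,(q^{k-l-1}-1)=q^{k}-q^{k-l}+q-1$, as required.

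The one delicate point is the bookkeeping around the all-zero tuple: one has to work with $N_g-1$ rather than $N_g$, both to keep the sum over $g$ finite and so that the Möbius identity is only ever applied to a genuine nonzero monic gcd. Everything else is a direct computation. An alternative route is an induction on $l$ in the spirit of the proof of Lemma~\ref{newlem1}, splitting $S$ according to whether $f_1$ vanishes and, in the nonvanishing case, dividing out the gcd; this is feasible but more cumbersome, because the shifted degree bounds $t_i-1-\deg g$ can turn negative and force individual coordinates to vanish, producing boundary cases that the inclusion--exclusion above handles automatically.
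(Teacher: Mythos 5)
Your proof is correct, and it takes a genuinely different route from the one in the paper. The paper argues by induction on $l$: it splits $S$ according to whether $f_1=0$ (reducing to the case $l-1$), and for $f_1\neq 0$ it stratifies by $\deg f_1$ and counts the reduced tuples $R_{n,\le\bar m}$ by removing from $P_{n,\le\bar m}$ all tuples obtained from a reduced tuple of smaller first degree by multiplying through by a common monic factor --- in effect a sieve on the degree of the gcd, carried out by a second, nested induction exactly as in Lemma \ref{newlem1}. You instead pass to counting coprime tuples via the free action of $\F_q^*$ (each orbit containing exactly one reduced form) and then run a single M\"obius inversion in $\F_q[X]$, where the identity $\sum_{g}\mu(g)x^{\deg g}=1-qx$ kills every degree $\delta\ge 2$ and leaves a two-term computation. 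Your argument is shorter, needs no induction, handles the degenerate shifted degree bounds $t_i-\delta\le 0$ uniformly through $\max(t_i-\delta,0)$ (precisely the boundary cases that make the paper's stratification delicate), and makes it transparent that the answer depends only on $k$ and $l$ and not on the ordering of, or the individual values of, the $t_i$. What the paper's route buys is self-containedness and reusability: the authors deliberately give an elementary count that avoids the M\"obius function of $\F_q[X]$ (the coprimality count is what they cite to \cite{corteel} and \cite{benjamin}), and the stratification by the degrees of the $f_i$ is exactly what is recycled to extract the weight distribution in Theorem \ref{thm1} and Remark \ref{remarkweights}. The points you flag as delicate --- working with $N_g-1$ so that the interchanged sums are finite and the M\"obius identity is applied only to a genuine monic gcd, and the final division by $q-1$ --- are all handled correctly.
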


\begin{proof}By Lemma \ref{newlem1} we have that the size of $S$ for $l=1$ with deg$(f_1)\le t_1-1$ and deg$(f_2)\le t_2-1$ is $q^{t_1+t_2-1}+1$. This provides the base case. Now, assume that for all $l'$ with $1\leq l'\leq l-1$, the number of tuples in 
$$S'=\{(f_1,f_2,\dots,f_{l'+1})|\deg{(f_i)}\leq t_i-1,1\le t_1\le t_2\dots\le t_{l'+1},$$$$(f_1,f_2,\dots,f_{l'+1})\text{ is in reduced form}\}$$
%$$\{(f_1,f_2,\dots,f_{l'+1})|\text{gcd}(f_1,f_2,\dots,f_{l'+1})=1, \deg{f_i}\leq t_i-1,$$ $$ f_i\text{ is monic for } i\text{ such that }f_j=0, \forall j<i\}$$
is equal to $q^{k'-1}+q^{k'-2}+\dots+q^{k'-l'}+1$, where $k'=\sum_{i=1}^{l'+1}t_i$.

We first count the number of tuples in $S$ that have $f_1$ equal to zero and see that the number of such tuples is the same as the number of points in $S'$ with $l'=l-1$ and $k'=k-t_1$ which has $$q^{k-t_1-1}+q^{k-t_1-2}+\dots+q^{k-t_1-(l-1)}+1$$ elements by our induction hypothesis.

We will now count the number of tuples in $S$ for which $f_1$ is non-zero and monic, in a similar fashion as we have done in Lemma \ref{newlem1}. Let $\bar{m}$ be a vector of length $l$ whose entries are $(m_2,\ldots,m_{l+1})$ and let $P_{n,\leq \bar{m}}$, denote all $(l+1)$-tuples $(f_1,f_2,\dots,f_{l+1})$ where $\deg(f_1)=n$, $\deg(f_i)\leq m_i$ for all $i\in\{2,\ldots,l+1\}$.
Let $R_{n,\leq \bar{m}}$ denote the number of $(l+1)$-tuples in $P_{n,\leq \bar{m}}$ in reduced form.
We see that the number of tuples in $S$ for which $f_1$ is non-zero equals the size of $\cup_{i=0}^{t_1-1}R_{i,\leq \bar{t}-\bar{1}}$, where $\bar{t}=(t_2,\ldots,t_{l+1})$ and $\bar{1}$ is the all-one vector.

We see that there are $q^{n_2+\dots+n_{l+1}+l}$ tuples in $R_{0,\leq \bar{n}}$, where $\bar{n}=(n_2,\ldots,n_{l+1})$.
We have $q^{n_2+\dots+n_{l+1}+l+1}$ tuples in $P_{1,\leq \bar{n}}$. Of those tuples, in order to calculate the size of $R_{1,\le \bar{n}}$, we have to exclude the tuples that arise from a tuple with $\deg(f_1)=0$ and $\deg(f_j)\leq n_j-1$ for all $j\in \{2,\ldots,l+1\}$. Each of those gives rise to $q$ different tuples with $\gcd(f_1,\dots,f_{l+1})\ne1$ in $P_{1,\leq \bar{n}}$. Hence, we find that
$|R_{1,\leq \bar{n}}|=q^{n_2+\dots+n_{l+1}+l+1}-q\cdot q^{n_2+\dots+n_{l+1}}$. We claim that 
$$|R_{r,\leq \bar{n}}|=q^{n_2+\dots+n_{l+1}+l+s}- q^{n_2+\dots+n_{l+1}+s}$$
and have seen that the statement holds for $r=1$. Now assume that for a certain $m$, our claim holds for all $1\le r\le m \le t_1-2$ and $\deg(f_j)\le n_j$, $(m\le n_j\le t_j-1, j\in\{2,\dots,l+1\})$. 

We know that there are $q^{n_2+\dots+n_{l+1}+l+m+1}$ tuples $(f_1,\dots,f_{l+1})$ in $P_{m+1,\leq \bar{n}}$. We see that all tuples in $R_{m,\leq \bar{n}-\bar{1}}$, each give rise to $q$ tuples in $P_{m+1,\leq \bar{n}}$, and in more general, for all $i\in \{1,\ldots,m+1\}$, all tuples in $R_{m+1-i,\bar{n}-i\bar{1}}$ give rise to $q^i$ tuples in $P_{m+1,\leq \bar{n}}$. We conclude that there are
\begin{align*}
&q^{n_2+\dots+n_{l+1}+l+m+1}\\
&-q^{-l}(q^{n_2+\dots+n_{l+1}+l+m+1}-q^{n_2+\dots+n_{l+1}+m+1})\\
&-q^{-2l}(q^{n_2+\dots+n_{l+1}+l+m+1}-q^{n_2+\dots+n_{l+1}+m+1})\\
\vdots\\
&-q^{-ml}(q^{n_2+\dots+n_{l+1}+l+m+1}-q^{n_2+\dots+n_{l+1}+m+1})\\
&-q^{-(m+1)l}(q^{n_2+\dots+n_{l+1}+l+m+1})\\
&= q^{n_2+\dots+n_{l+1}+l+m+1}-q^{n_2+\dots+n_{l+1}+m+1}
\end{align*} tuples in $R_{m+1,\leq \bar{n}}$. 

Now recall that $|S|=q^{k-t_1-1}+q^{k-t_1-2}+\dots+q^{k-t_1-(l-1)}+1+\cup_{i=0}^{t_1-1} |R_{i,\leq \bar{t}-\bar{1} }|$.
Since $\cup_{i=0}^{t_1-1} R_{i,\leq \bar{t}-\bar{1} }$ has size 
$q^{k-t_1}+(q^{k-t_1+1}-q^{k-t_1-l+1})+\ldots+(q^{k-2}-q^{k-2-l})+(q^{k-1}-q^{k-1-l})$,
we have that $|S|=q^{k-1}+q^{k-2}+\dots+q^{k-l}+1$ as claimed.
\end{proof}

\begin{theorem}\label{main}Let $L$ be the set of points with coordinates $$\langle (f_1(\alpha),f_2(\alpha),\dots,f_{l+1}(\alpha))\rangle,$$ in $\PG(l,q^h)$
where $\alpha$ is an element of $\F_{q^h}\setminus \F_q$ such that $[\F_q(\alpha):\F_q]=s$, $f_i$ is a polynomial over $\mathbb{F}_q$ of degree at most $t_i-1$ such that for all $i\neq j$, $t_i+t_j\leq s+1$.

	Then $L$ is an $\F_q$-linear set of rank $k=\sum_{i=1}^{l+1}t_i$ with $q^{k-1}+q^{k-2}+\dots+q^{k-l}+1$ points.
\end{theorem}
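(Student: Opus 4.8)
The plan is to reduce the statement entirely to the three preceding lemmas. First I would note that the cardinality claim is immediate: Lemma \ref{lem4} provides a bijection between the points of $L$ and the set $S$ of reduced-form $(l+1)$-tuples with $\deg f_i\le t_i-1$, and Lemma \ref{newlem2} evaluates $|S|=q^{k-1}+q^{k-2}+\dots+q^{k-l}+1$. So the only thing genuinely left to argue is that $L$ is an $\F_q$-linear set of rank $k$ in $\PG(l,q^h)$.

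For that, I would write $f_i(\alpha)=\mu_{i,t_i}+\alpha\mu_{i,t_i-1}+\dots+\alpha^{t_i-1}\mu_{i,1}$ with all $\mu_{i,j}\in\F_q$, so that $L=L_U$ where $U$ is the $\F_q$-span in $V=\F_{q^h}^{l+1}$ of the $k=\sum_i t_i$ vectors $\alpha^j\textbf{e}_\textbf{i}$ with $0\le j\le t_i-1$ and $1\le i\le l+1$. It then suffices to check that these $k$ vectors are $\F_q$-linearly independent. An $\F_q$-linear dependence among them restricts, in the $i$-th coordinate, to an identity $\sum_{j=0}^{t_i-1}c_{i,j}\alpha^j=0$ with $c_{i,j}\in\F_q$; since $t_j\ge1$ for every $j$, the hypothesis $t_i+t_j\le s+1$ forces $t_i\le s$, hence $t_i-1<s$, and as the minimal polynomial of $\alpha$ over $\F_q$ has degree $s$ this forces every $c_{i,j}=0$. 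Thus $\dim_{\F_q}U=k$, so $L=L_U$ is an $\F_q$-linear set of rank $k$, and combining this with Lemmas \ref{lem4} and \ref{newlem2} gives the size.

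Equivalently — and this is the geometric picture referred to in Remark \ref{rem} — one can realise $L$ as a projection extending Construction \ref{cons1}: partition $\{\textbf{e}_\textbf{1},\dots,\textbf{e}_\textbf{k}\}$ into $l+1$ ordered parts of sizes $t_1,\dots,t_{l+1}$, attach to each part the subspace $\pi_i$ exactly as in Construction \ref{cons1} (with $\pi_i=\emptyset$ when $t_i=1$), and set $\Pi=\langle\pi_1,\dots,\pi_{l+1}\rangle$ and $\Omega=\langle\textbf{e}_{\textbf{1},\textbf{t}_\textbf{1}},\dots,\textbf{e}_{\textbf{l}\textbf{+}\textbf{1},\textbf{t}_{\textbf{l}\textbf{+}\textbf{1}}}\rangle$. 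One computes $\dim\Pi=\sum_i(t_i-1)-1=k-l-2$ and $\dim\Omega=l$, so $p_{\Pi,\Omega}$ is defined provided $\Pi$ is disjoint from $\Sigma$ and from $\Omega$; both disjointness claims follow verbatim from the argument in Lemma \ref{lem1}, applying relation \eqref{special} block by block and again using $t_i-1<s$. Then, exactly as in Lemma \ref{lem2}, the point $\langle(\mu_{1,1},\dots,\mu_{l+1,t_{l+1}})\rangle$ of $\Sigma$ is sent to $\langle(f_1(\alpha),\dots,f_{l+1}(\alpha))\rangle$, so $p_{\Pi,\Omega}(\Sigma)=L$, and by \cite{lunardon} this is an $\F_q$-linear set of rank $k$ in $\PG(l,q^h)$.

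There is no substantial obstacle: the combinatorial core — uniqueness of reduced forms (Lemma \ref{lem4}) and the count of reduced coprime tuples (Lemma \ref{newlem2}) — is already in place, and the theorem is an assembly of these. The only points needing a moment's care are (i) deducing $t_i\le s$ from the pairwise bound $t_i+t_j\le s+1$ together with $t_j\ge1$, which is what powers both the rank computation and the disjointness of $\Pi$ from $\Sigma$ and $\Omega$; and (ii), along the projection route, the dimension bookkeeping for $\Pi$ and $\Omega$. It is worth recording that the full pairwise hypothesis $t_i+t_j\le s+1$ (rather than merely $t_i\le s$) is used upstream in Lemma \ref{lem4} via its appeal to Lemma \ref{lemnew} on pairs of blocks, and only the weaker consequence $t_i\le s$ is needed here.
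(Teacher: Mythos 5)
Your proposal is correct and follows essentially the same route as the paper: the size claim is reduced to Lemmas \ref{lem4} and \ref{newlem2}, and the rank claim amounts to checking that the $k$ vectors $\alpha^j\textbf{e}_\textbf{i}$ are $\F_q$-independent (which the paper states more tersely as the vector set being closed under $\F_q$-operations and having size $q^k$). Your alternative projection description is also what the paper records separately in Remark \ref{rem}, so nothing further is needed.
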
	

\begin{proof} The set of vectors of the form $(f_1(\alpha),f_2(\alpha),\dots,f_{l+1}(\alpha))$ is closed under addition and $\F_q$-multiplication, and has size $q^{t_1+t_2+\ldots+t_{l+1}}=q^k$. Hence, the set of points with coordinates of the form $\langle (f_1(\alpha),f_2(\alpha),\dots,f_{l+1}(\alpha))\rangle$ is an $\F_q$-linear set of rank $k$.
Without loss of generality we may assume that $t_1\le t_2\le\dots\le t_{l+1}$. Lemma \ref{lem4} provides a bijection between the points in $L$ and $S$, where $$S=
\{(f_1,f_2,\dots,f_{l+1})|\deg{f_i}\leq t_i-1,1\le t_1\le t_2\dots\le t_{l+1},$$$$ (f_1,f_2,\dots,f_{l+1})\text{ is in reduced form}\}$$ and Lemma \ref{newlem2} shows that the number of elements in $S$ is equal to $q^{k-1}+q^{k-2}+\dots+q^{k-l}+1$. Hence $L$ has size $q^{k-1}+q^{k-2}+\dots+q^{k-l}+1$.
\end{proof}

\begin{corollary} \label{thm2}
	We can construct rank $k$ linear sets of size $q^{k-1}+q^{k-2}+\dots+q^{k-l}+1$ in $\PG(l,q^h)$, for all $l+1\leq k\le (l+1)\frac{h+1}{2}$ if $h$ is odd and $l+1\leq k\le 1+(l+1)\frac{h}{2}$ if $h$ is even.
\end{corollary}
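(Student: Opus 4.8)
The plan is to obtain this as a direct consequence of Theorem \ref{main}, the only real work being to choose the data in that theorem optimally. First I would fix $\alpha$ to be a generator of $\F_{q^h}$ over $\F_q$, so that $s=[\F_q(\alpha):\F_q]=h$; since the hypotheses of Theorem \ref{main} constrain the partition sizes $t_i$ only through the inequalities $t_i+t_j\le s+1$, taking $s$ as large as possible is the most favourable choice. With this choice it suffices to prove the following purely combinatorial statement: for every integer $k$ in the stated range there exist positive integers $t_1,\dots,t_{l+1}$ with $\sum_{i=1}^{l+1}t_i=k$ and $t_i+t_j\le h+1$ for all $i\neq j$. Given such a tuple, Theorem \ref{main} immediately yields an $\F_q$-linear set of rank $k$ and size $q^{k-1}+q^{k-2}+\dots+q^{k-l}+1$ in $\PG(l,q^h)$.

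For the combinatorial step, note that after sorting $t_1\le\dots\le t_{l+1}$ the condition ``$t_i+t_j\le h+1$ for all $i\ne j$'' is equivalent to the single inequality $t_l+t_{l+1}\le h+1$. If $h$ is odd, put $m=(h+1)/2$; for any $k$ with $l+1\le k\le (l+1)m$ I would write $k$ as a sum of $l+1$ integers each in $\{1,\dots,m\}$ --- start from the all-ones tuple and raise the entries one unit at a time, capping each at $m$, which works because the total slack $(l+1)(m-1)$ is at least $k-(l+1)$ --- and then $t_l+t_{l+1}\le 2m=h+1$. Since $(l+1)m=(l+1)(h+1)/2$, this covers the whole range. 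If $h$ is even, put $m=h/2$; the same packing argument handles $l+1\le k\le (l+1)m$ with $t_l+t_{l+1}\le 2m=h<h+1$, and the one remaining value $k=(l+1)m+1=1+(l+1)h/2$ is realised by $t_1=\dots=t_l=m$ and $t_{l+1}=m+1$, for which $t_l+t_{l+1}=2m+1=h+1$ and every other pairwise sum equals $2m=h$.

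I expect no serious obstacle here: the argument is essentially a packing/balancing computation once one has reduced to Theorem \ref{main}. The only point needing a little care is the boundary value $k=1+(l+1)h/2$ when $h$ is even, where the fully balanced partition already violates $t_l+t_{l+1}\le h+1$ and one part must instead be taken one unit larger than the others. For completeness I would also observe that these ranges are sharp for the construction: if $t_1\le\dots\le t_{l+1}$ and $t_l+t_{l+1}\le h+1$ then $t_1,\dots,t_l\le\lfloor(h+1)/2\rfloor$, whence $\sum_{i=1}^{l+1} t_i\le (l-1)\lfloor(h+1)/2\rfloor+(h+1)$, which equals $(l+1)(h+1)/2$ for $h$ odd and $1+(l+1)h/2$ for $h$ even.
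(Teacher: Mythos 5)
Your proposal is correct and follows essentially the same route as the paper: set $s=h$ by taking $\alpha$ a generator of $\F_{q^h}$ over $\F_q$, then realise each admissible $k$ by a balanced partition $t_1,\dots,t_{l+1}$ with $t_i+t_j\le h+1$, using the all-$\frac{h+1}{2}$ tuple for $h$ odd and the $(\frac{h}{2},\dots,\frac{h}{2},\frac{h}{2}+1)$ tuple for $h$ even, and lowering entries for smaller $k$. Your explicit packing argument and the sharpness remark merely make precise the paper's ``adjust the partition sizes accordingly.''
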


\begin{proof} By taking a generator $\alpha$ in $\F_{q^h}^*$, we have $s=h$. We have to write $k=t_1+\ldots+t_{l+1}$ for $t_i$ with $t_i+t_j\leq h+1$ for $i\neq j$. Hence, if $h$ is odd, we can take $t_1=t_2=\ldots=t_{l+1}=\frac{h+1}{2}$ and find that $k=(l+1)\frac{h+1}{2}$. For $l+1\leq k<(l+1)\frac{h+1}{2}$ we just need to lower the partition sizes $t_i$ accordingly as long as $t_i\geq 1$.
If $h$ is even, then we can take $t_1=t_2=\ldots=t_l=\frac{h}{2}$ and $t_{l+1}=\frac{h}{2}+1$, to find $k=(l+1)\frac{h}{2}+1$. As before, for $l+1\leq k<(l+1)\frac{h}{2}+1$ we just need to adjust the partition sizes $t_i$ accordingly.
\end{proof}

\begin{remark} \label{remarkweights}The weight of different points can be calculated in a similar fashion as in Theorem \ref{thm1}. Just as before, the tuples in the proof of Theorem \ref{thm2}  which were being counted repeatedly represent points with weight more than 1. Table 2 summarises the conditions on deg$(f_j)=d_j$, $(j=2,\dots,l+1)$ to obtain points of different weights, depending on deg$(f_1)=d_1$. 
Recall that the weight of the points range from 1 to $t_1$ when $f_1\ne0$. Summing the numbers in each column we find that for $f_1\ne0$, the total number of points having weight $w\in\{1,\dots,t_1-2\}$ is \begin{equation}\sum_{i=w}^{t_1}(q^{k-(w-1)l-i}-q^{k-wl-i})-\sum_{i=w+1}^{t_1-1}(q^{k-wl-i}-q^{k-(w+1)l-i}),\label{eqweights}\end{equation}	the total number of points having weight $w=t_1-1$ is \begin{equation}\sum_{i=w}^{t_1}(q^{k-(w-1)l-i}-q^{k-wl-i}),\label{eqweights2}\end{equation}
and the total number of points having weight $w=t_1$ is $q^{k-(t_1-1)l-t_1}$.
Note that this table identifies the weight of points for which $f_1\ne0$ but recall that the case $f_1=0$ is equivalent to a $(k-t_1)$-rank linear set in $\PG(l-1,q^h)$ defined in the same way. So we can use the same idea to identify the weight of points for which $f_1=0$. Hence the construction gives a linear set which has $q^{k-1}+q^{k-2}+\dots+q^{k-l}$ points whose weights range from $1,2,\dots,t_{l}$, having at least one point in each category, and exactly one point of weight $t_{l+1}$ if $t_l<t_{l+1}$.  See Example \ref{exam} for an explicit example.
\end{remark}	
	
\begin{table}
	\resizebox{\textwidth}{!}{
	\begin{tabular}{|c||c|c|ccc|c|c|}
		\hline 
		
		$(d_1)$& weight 1 & weight 2   &&$\dots$ & & weight $t_1-1$  & weight $t_1$  \\ 
		\hline
		\hline 
		$t_1-1$& $\{d_j\le t_j-1\}$  &0  &&$\dots$& &0  &0  \\ 
		&&&&&&&\\
		\hline 
		$t_1-2$& $\{d_j\le t_j-1\}$ & $\{d_j\le t_j-2\}$ &&$\dots$&   &0  &0  \\ 
		&$-\{d_j\le t_j-2\}$&&&&&&\\
		\hline 
		$t_1-3$& $\{d_j\le t_j-1\}$ & $\{d_j\le t_j-2\}$ &&$\dots$&   &  0&  0\\ 
		&$-\{d_j\le t_j-2\}$&$-\{d_j\le t_j-3\}$&&&&&\\
		\hline 
		&&&&&&&\\
		$\vdots$& $\vdots$ & $\vdots$ && $\ddots$ &&$\vdots$ & $\vdots$  \\ 
		&&&&&&&\\
		\hline 
		1& $\{d_j\le t_j-1\}$ &$\{d_j\le t_j-2\}$  &   &$\dots$& & $\{d_j\le t_j-(t_1-1)\}$ & 0 \\ 
		&$-\{d_j\le t_j-2\}$&$-\{d_j\le t_j-3\}$&&&&&\\
		\hline 
		0& $\{d_j\le t_j-1\}$ & $\{d_j\le t_j-2\}$ & &$\dots$& & $\{d_j\le t_j-(t_1-1)\}$  & $\{d_j\le t_j-t_1\}$ \\ 
		&$-\{d_j\le t_j-2\}$&$-\{d_j\le t_j-3\}$&&&&$-\{d_j\le t_j-t_1\}$&\\
		\hline 
	\end{tabular}
}
\smallskip
\caption{Conditions on the degrees of $f_i$ for obtaining points of different weights when $f_1$ is non-zero.}
\end{table} 

\begin{remark}\label{rem} It is not too hard to check that the linear set $L$ can be obtained as the projection of $\Sigma=\PG(k-1,q)$, a canonical subgeometry of $\Sigma^*=\PG(k-1,q^h)$, from $\Pi$ to $\Omega$, defined as follows:

Partition the set $\{\textbf{e}_\textbf{1},\dots,\textbf{e}_\textbf{k}\}$ into $(l+1)$ parts $A_1,A_2,\dots,A_{l+1}$ of size $1\le t_1,t_2,\dots,t_{l+1}$$\le h$ (as ordered sets) such that the sum of any 2 of them is at most $s+1$. Let 
\begin{align*}
A_1&=\{\textbf{e}_\textbf{1},\dots,\textbf{e}_{\textbf{t}_\textbf{1}}\}=\{\textbf{e}_{\textbf{1}\textbf{,}\textbf{1}},\dots,\textbf{e}_{\textbf{1}\textbf{,}{\textbf{t}_\textbf{1}}}\}\\
A_2&=\{\textbf{e}_{\textbf{t}_\textbf{1}\textbf{+}\textbf{1}},\dots,\textbf{e}_{\textbf{t}_\textbf{1}\textbf{+}\textbf{t}_\textbf{2}}\}=\{\textbf{e}_{\textbf{2}\textbf{,}\textbf{1}},\dots,\textbf{e}_{\textbf{2}\textbf{,}{\textbf{t}_\textbf{2}}}\}\\
&\vdots\\
A_{l+1}&=\{\textbf{e}_{\textbf{t}_\textbf{1}\textbf{+}\textbf{t}_\textbf{2}\textbf{+}\dots\textbf{+}\textbf{t}_\textbf{l}\textbf{+}\textbf{1}},\dots,\textbf{e}_{(\textbf{t}_\textbf{1}\textbf{+}\textbf{t}_\textbf{2}\textbf{+}\dots\textbf{+}\textbf{t}_{\textbf{l}\textbf{+}\textbf{1}})=\textbf{k}}\}=\{\textbf{e}_{{\textbf{l}\textbf{+}\textbf{1}}\textbf{,}\textbf{1}},\dots,\textbf{e}_{{\textbf{l}\textbf{+}\textbf{1}}\textbf{,}{\textbf{t}_{\textbf{l}\textbf{+}\textbf{1}}}}\}.
\end{align*}
With each $A_i, i=1,2,\dots,l+1$, associate the subspace of $\Sigma^*$  defined as
 $\pi_i=\emptyset$ when $t_i=1$ and,
\begin{align*}
\pi_i&=\langle \langle\textbf{e}_{\textbf{i}\textbf{,}\textbf{1}}-\alpha \textbf{e}_{\textbf{i}\textbf{,}{\textbf{2}}}\rangle,\langle\textbf{e}_{\textbf{i}\textbf{,}{\textbf{2}}}-\alpha \textbf{e}_{\textbf{i}\textbf{,}{\textbf{3}}}\rangle,\dots,\langle\textbf{e}_{\textbf{i}\textbf{,}{\textbf{t}_{\textbf{i}\textbf{-}\textbf{1}}}}-\alpha \textbf{e}_{\textbf{i}\textbf{,}{\textbf{t}_{\textbf{i}}}}\rangle\rangle\\
&=\langle \langle\textbf{e}_{\textbf{i}\textbf{,}\textbf{1}}-\alpha \textbf{e}_{\textbf{i}\textbf{,}\textbf{2}}\rangle,\langle\textbf{e}_{\textbf{i}\textbf{,}\textbf{1}}-\alpha^2\textbf{e}_{\textbf{i}\textbf{,}\textbf{3}}\rangle,\dots,\langle\textbf{e}_{\textbf{i}\textbf{,}\textbf{1}}-\alpha^{t_i-1}\textbf{e}_{\textbf{i}\textbf{,}{\textbf{t}_\textbf{i}}}\rangle\rangle,
\end{align*} 
if $t_i\geq 2$. 
Let $\Omega$ be the subspace $$\Omega:=\langle \langle\textbf{e}_{\textbf{1}\textbf{,}{\textbf{t}_\textbf{1}}}\rangle,\langle\textbf{e}_{\textbf{2}\textbf{,}{\textbf{t}_\textbf{2}}}\rangle,\dots,\langle\textbf{e}_{\textbf{l}\textbf{+}\textbf{1}\textbf{,}{\textbf{t}_{\textbf{l}\textbf{+}\textbf{1}}}}\rangle\rangle$$
and $\Pi$ be the $(k-l-2)$-dimensional subspace $$\Pi:=\langle\pi_1,\dots,\pi_{l+1}\rangle.$$
\end{remark}

\begin{example}\label{exam}
Consider the case $l=2,t_1=2,t_2=3,t_3=4$ in Theorem \ref{main}: take the set of points $$L=\langle(f_1(\alpha),f_2(\alpha),f_3(\alpha))\rangle$$ in $\PG(2,q^h)$, where $f_1$ is a linear, $f_2$ is a quadratic and $f_3$ is a cubic polynomial over $\mathbb{F}_q$ with $\alpha$ generating an extension of $\mathbb{F}_q$ of degree at least $t_2+t_3-1=6$. More explicitely, $L$ is described as 
\[L=\{ \langle (\mu_1+\mu_2\alpha,\mu_3+\mu_4\alpha+\mu_5\alpha^2,\mu_6+\mu_7\alpha+\mu_8\alpha^2+\mu_9\alpha^3)\rangle \mid \mu_i\in \mathbb{F}_q,\]\[(\mu_1,\mu_2,\ldots,\mu_9)\neq (0,0\ldots,0)\}.\]
This set $L$ clearly forms a rank 9 linear set in $\PG(2,q^h)$ and we have shown in Theorem \ref{main} that it has size $q^8+q^7+1$. As mentioned in Remark \ref{rem}, $L$ can be viewed as a projection of $\Sigma=\PG(8,q)$, a canonical subgeometry of $\Sigma^*=\PG(8,q^h)$, from $\Pi=\langle\langle\textbf{e}_\textbf{1}-\alpha\textbf{e}_\textbf{2}\rangle,\langle\textbf{e}_\textbf{3}-\alpha\textbf{e}_\textbf{4}\rangle,\langle\textbf{e}_\textbf{4}-\alpha\textbf{e}_\textbf{5}\rangle,\langle\textbf{e}_\textbf{6}-\alpha\textbf{e}_\textbf{7}\rangle,\langle\textbf{e}_\textbf{7}-\alpha\textbf{e}_\textbf{8}\rangle,\langle\textbf{e}_\textbf{8}-\alpha\textbf{e}_\textbf{9}\rangle\rangle$ to $\Omega=\langle\langle\textbf{e}_\textbf{2}\rangle,\langle\textbf{e}_\textbf{5}\rangle,\langle\textbf{e}_\textbf{9}\rangle\rangle$. Counting the number of points of $L$ in a particular weight category as in Remark \ref{remarkweights}, we get that, when $f_1\ne0$, there are 
\begin{align*}
	q^8+q^7-q^6-q^5 &\text{ points of weight } 1,\\
	q^5 &\text{ points of weight } 2,
\end{align*}
when $f_1=0,f_2\ne0$, there are 
\begin{align*}
q^6-q^4 &\text{ points of weight } 1,\\
q^4-q^2 &\text{ points of weight } 2,\\
q^2 &\text{ points of weight } 3
\end{align*}
and when $f_1=f_2=0,f_3$ has to be non-zero and this represents a unique point of weight 4. So, in total, we find $q^8+q^7-q^5-q^4$ points of weight $1$, $q^5+q^4-q^2$ points of weight $2$, $q^2$ points of weight $3$, and one point of weight $4$.
\end{example}

\subsection{Blocking sets of non-R\'edei type from Theorem \ref{main}}

Inspired by the definition of R\'edei type blocking sets, we could call a linear set of rank $k$ in $\PG(2,q^h)$ such that there is a line intersecting the linear set in a linear set of rank $k-1$, a {\em linear set of R\'edei type}. It is easy to see that the linear set of Theorem \ref{main} in $\PG(2,q^h)$ is then of R\'edei type if and only if one of the three partitions of $\{\textbf{e}_\textbf{1},\dots,\textbf{e}_\textbf{k}\}$ is of size one (i.e. if one of the $t_i$'s equals $1$), or equivalently, if one of the coordinates is defined by a variable ranging in $\F_q$. Consequently, the linear set constructed in Example \ref{exam} is of non-R\'edei type.

Now consider a linear blocking set arising from Theorem \ref{main}, that is, a linear set of rank $k=h+1$ constructed in $\PG(2,q^h)$ in Theorem \ref{main}. We see that $h+1=t_1+t_2+t_3$ for some $1\leq t_1\leq t_2\leq t_3$ and we have seen it is of R\'edei type if and only if $t_1=1$. Now we also see that if there are two different R\'edei lines then two of the partitions must have size one, and hence $t_1=t_2=1,t_3=h-1$.
It follows, considering Subsection \ref{trace}, that this linear set intersects its R\'edei line in a linear set defined by the Trace map. It has been proven in general that a linear blocking set with at least two R\'edei lines arises from the Trace map in \cite[Theorem 5]{polverino}. 

Hence, in order to have linear blocking sets of non-R\'edei type arising from our construction, we simply need to take the size of all the partitions to be at least two. This implies that $h+1\geq 2+2+2=6$. Note that this agrees with the fact that there are no non-R\'edei type blocking sets of size $q^h+q^{h-1}+1$ for $h=2,3,4$; there do exist non-R\'edei type blocking sets in $\PG(2,q^4)$, but the smallest one of those has size $q^4+q^3+q^2+1$ (see \cite{bonoli}).

\subsection{Open Problems}

In this paper, we have constructed a large class of $\F_q$-linear sets of rank $k$ in $\PG(l,q^h)$ of size $q^{k-1}+q^{k-2}+\ldots+q^{k-l}+1$. This leads to the some questions for future research which were not (or only partially) addressed in this paper.

\begin{enumerate}[(A)]
\item Linear sets of rank $h$ in $\PG(1,q^h)$ have been studied quite intensively over the past years. It is well-known that every such linear set is equivalent to one of the form $\{\langle x,f(x)\rangle:x\in \F_{q^h}\setminus\{0\}\}$, where $f$ is a linearised polynomial. From this point of view, finding linear sets of smallest possible size comes down to finding linearised polynomials $f$ such that $|Im(f(x)/x)|$ is as small as possible. In \cite[Open Problem 3]{carlitz} the authors ask to classify such polynomials $f$ (possibly for small values of $h$) and to find new examples. Even though we know that all the examples for $k=h$ and $l=1$ constructed in this paper give rise to such polynomials, we were unfortunately not able to write them down explicitely. 

\item In \cite[Theorem 4.4]{vdv}, it was shown that for a linear set $L$ of rank $k$ in $\PG(l,q^h)$, the  bound $|L|\geq q^{k-1}+q^{k-2}+\cdots+q^{k-l}+1$ holds when there is a hyperplane $\Pi$ of $\PG(l,q^h)$ such that $|L\cap \Pi|=\frac{q^{l}-1}{q-1}$ and these intersection points span $\Pi$ (*). In the same paper (see \cite[Proposition 4.2]{vdv}), the authors construct an example of a linear set of rank $k$ in $\PG(2,q^k)$ with $|L|\geq q^{k-1}+q^{k-2}+\cdots+q^{k-l}+1$ and satisfying the mentioned condition (*). An example with the same weight distribution as theirs can be found by taking our example with $l=2$, $t_1=1,t_2=1,t_3=k-2,s=k$. It is easy to see that this generalises to create an example meeting the lower bound and condition (*) for all $2\leq l\leq k-1$, when we take $t_1=t_2=\ldots=t_{l}=1,t_{l+1}=k-l,s=k$. Furthermore, there are other choices for $t_i$ for which our constructed linear set meets the lower bound and satisfies (*). E.g, consider a linear set of rank $t_1+t_2+t_3=k$ in $\PG(2,q^h)$ obtained from our construction. It is not too hard to show that the line through the points $\langle (1,f_2(\alpha),0)\rangle$ and $\langle (1,0,f_3(\alpha))\rangle$, where $\deg(f_2)=t_2-1$ and $\deg(f_3)=t_3-1$ and $\gcd(f_2,f_3)=1$ is a $(q+1)$-secant whenever $(t_1-1)+(t_2-1)+(t_3-1)=k-3<s\leq h$. Furthermore, note that an $\F_q$-linear set of rank at least $h+3$ in $\PG(2,q^h)$ necessarily intersects every line of an $\F_q$-linear set of rank at least $3$ (i.e. in at least $q^2+1$ points), so it can never satisfy condition (*). We see that it is not because an $\F_q$-linear set of rank $k$ meets the lower bound for its size that there is a secant line to it that meets the theoretical lower bound of $q+1$. 

We would like to repeat here that the authors of \cite{vdv} left it as an open problem to replace the condition (*) by a condition that is less restrictive. We believe the lower bound $|L|\geq q^{k-1}+q^{k-2}+\cdots+q^{k-l}+1$ holds for all linear sets of rank $k$ in $\PG(l,q^h)$ where $h$ is prime and $k< h+3$.

\item We have only very briefly touched upon the subject of equivalence of our constructed linear sets (e.g. in Subsection \ref{k23}). In general, the equivalence problem for linear sets is hard, and only partial results have been obtained so far; most of those results again deal with the case $l=1,k=h$ \cite{classes,bence2,corrado}. The problem of finding the equivalence classes of linear sets constructed by us in this paper is open; in particular, is it possible to find equivalent linear sets in $\PG(l,q^h)$ of the same rank for different $t_i$'s? 
It is worth remembering that the weight of a point in a linear set $L$ is only defined when the subspace $U$ defining the linear set is fixed. This also means that it is possible to have one linear set $L_U=L_V$ such that the weight of a point of $L_U$ is different from the weight of the point in $L_V$ (this issue is known for quite some time now, see \cite[Remark]{fancsali} for an early reference to it). %So it is possible that there is an element  $\phi$ of $\PGammaL(2,q^h)$ such that a point $P$ of $L$ and its image $\phi(P)$ of $\phi(L)$ have different weights--depending on the choice of the underlying vector spaces.

\item We have shown in Proposition \ref{prop1}, that for $k=4$, our construction contains all possibilities for an $\F_q$-linear set of rank $k$ in $\PG(1,q^h)$. For $k=5$, in Proposition \ref{alles}, we needed to exclude the possibility that that the linear set contained an $\F_{q^2}$-subline but could still draw the same conclusion. However, we do not believe that for general $k$, our construction will construct all possible examples of linear sets of minimum size. But it would be interesting to find explicit examples of linear sets that meet the lower bound and cannot be obtained from our construction -- or to disprove our belief!
\end{enumerate}

\vspace{1cm}
\noindent
{\bf Acknowledgement:} The authors would like to thank the referees for their detailed reading and suggestions. In particular, we thank one of the referees for pointing out the link with references \cite{benjamin} and \cite{corteel}.

\vspace{1cm}

\noindent
{\bf Address of the authors:}\\
Dibyayoti Jena, Geertrui Van de Voorde\\
School of Mathematics and Statistics --{\em Te Kur\=a Pangarau}\\
University of Canterbury -- {\em Te Whare W\=ananga o Waitaha}\\
Private bag 4800\\
8140 Christchurch --{\em \=Otautahi}\\
New Zealand --{\em Aotearoa}

\vspace{0.5cm}
\noindent
dibyayoti.jena@pg.canterbury.ac.nz, geertrui.vandevoorde@canterbury.ac.nz

\end{document}